\renewcommand{\leq}{\leqslant}
\renewcommand{\geq}{\geqslant}
\newcommand{\bbZ}{\mathbb{Z}}
\newcommand{\bbR}{\mathbb{R}}
\newcommand{\cl}{\rm{\bf Cl}}
\newcommand{\Dc}{\mathcal{D}}
\newcommand{\Fc}{\mathcal{F}}
\newcommand{\Kc}{\mathcal{K}}
\newcommand{\Uc}{\mathcal{U}}
\newcommand{\mi}{\mathbf{i}}
\theoremstyle{plain}
\newtheorem{theo}{Theorem}[section]
\newtheorem{main-theorem}{Theorem}
\newtheorem{lemm}[theo]{Lemma}
\theoremstyle{definition}
\newtheorem{defi}[theo]{Definition}
\newtheorem{exam}[theo]{Example}
\newtheorem{conj}[theo]{Conjecture}
\newtheorem{rem}[theo]{Remark}
\newtheorem*{clai-nn}{Claim}
\begin{document}
\title{Every component of a fractal square is a Peano continuum\thanks{This study is partially supported by Chinese NSF projects 11171123,11431007, 11471124 and 11771153.}}
\author{Jun Luo, Hui Rao and Ying Xiong}
\maketitle
\begin{abstract}
This paper concerns the local connectedness of components of self-similar sets.
Let   $n\ge2$ be an integer  and let $\Dc\subset\{0,1,\ldots,n-1\}^2$.
The self-similar set $F$ satisfying $\displaystyle F=\bigcup_{d\in\Dc}(F+d)/n$
is called a {\em fractal square}.
 We prove that every component of $F$ is locally connected. The same result for three-dimensional analogues of $F$ does not hold.
\end{abstract}


\section{Introduction}

An iterated function system (IFS) on  $\bbR^n$ is a family $\Fc=\{f_1,\ldots,f_q\}$  of  contractions $f_j:\bbR^n\rightarrow\bbR^n$ with $q\ge2$. The unique  nonempty compact set $E\subset\bbR^n$ with $E=\bigcup_jf_{j}(E)$ is called the \emph{attractor} of $\Fc$, or a \emph{self-similar set} determined by $\Fc$.  If all the mappings $f_j$ are similitudes, then we call
$E$ a {\em strictly self-similar set} (\cite{Hutchinson}).

There are many works devoted to the topological properties of self-similar sets, mainly on (a) connectedness, see for instance \cite{Deng-Lau,Hata,LAT,Ngai-Tang,Taylor-2011}  or the survey \cite{AT} and the references therein; (b) homeomorphy to a disk, see for instance \cite{LL,LLT,LRT,LZ-2004}; or even (c) homeomorphy to a ball (\cite{KLT,CT-2016}) and (d) the fundamental group (\cite{ADTW,DT,JLL, LT}).
However, local connectedness of self-similar sets is touched occasionally (\cite{Tang,Luo-2007}).

In 1985, M. Hata \cite{Hata} showed  that if a
self-similar set is connected, then it  is also locally connected, and hence  is a locally connected continuum,  also called a \emph{Peano continuum}.
This paper concerns two questions:

\textbf{Q1.} \emph{When is a self-similar set locally connected ?}

\textbf{Q2.} \emph{When is every component of a self-similar set locally connected ?}

 For the first question, we have the following simple answer.

 \begin{main-theorem}\label{main-1}
Let $E\subset \bbR^n$ be a self-similar set. Then $E$ is locally connected if and only if it has finitely many connected components. In this case $E$ consists of finitely many Peano continua.
\end{main-theorem}

The second question is much more delicate  and it is the main concern
of the present paper. We begin with  examples.

\begin{exam}\label{comb} Let $E_1={\mathcal K}\times [0,1]$ where ${\mathcal K}$ is the $1/4$-Cantor set. Clearly $E_1$ can be generated by the IFS $\displaystyle\Fc_1=\left\{  \frac{x+d}{4};~\ d\in D \right\}$  where
$\displaystyle D=\left \{a+b \mi; a=0,3; b=0,1,2,3  \right\}.$ Here $\mi$ denotes the imaginary unit, identified with the vector $(0,1)\in\mathbb{R}^2$.
The mappings in the above IFS may be illustrated by Figure \ref{cantor-comb}(left and middle), where the arrows  in the squares indicate the rotations and reflections involved. It is seen that $E_1$ is not locally connected, but its components are line segments and hence are locally connected.
\begin{figure}[h]
\begin{center}\vskip -0.1cm
\begin{tabular}{ccc}
\includegraphics[width=4.5cm]{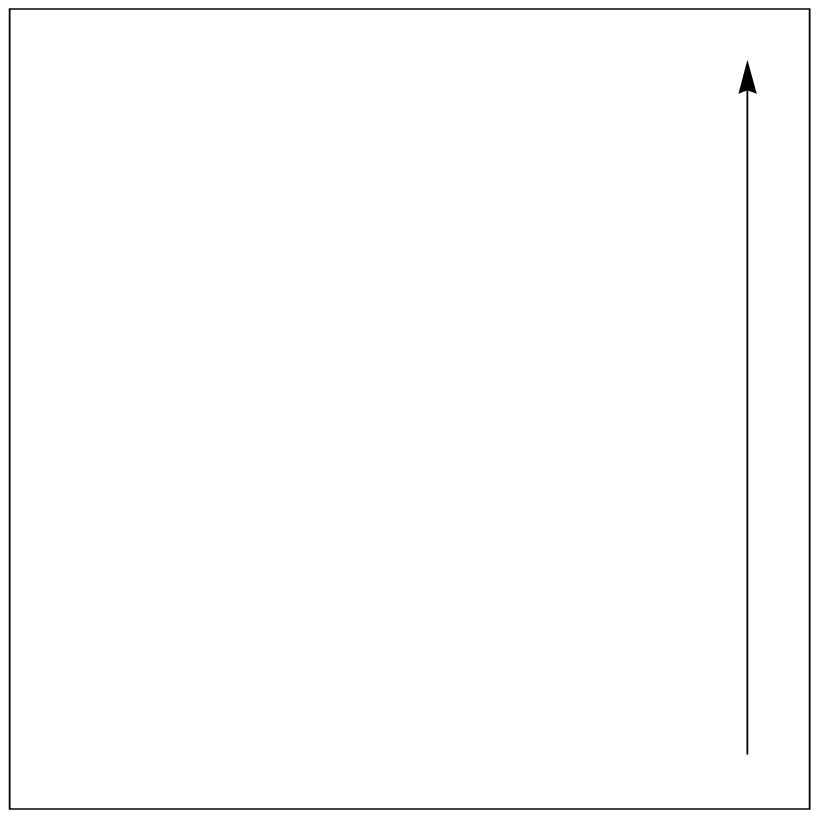}& \includegraphics[width=4.5cm]{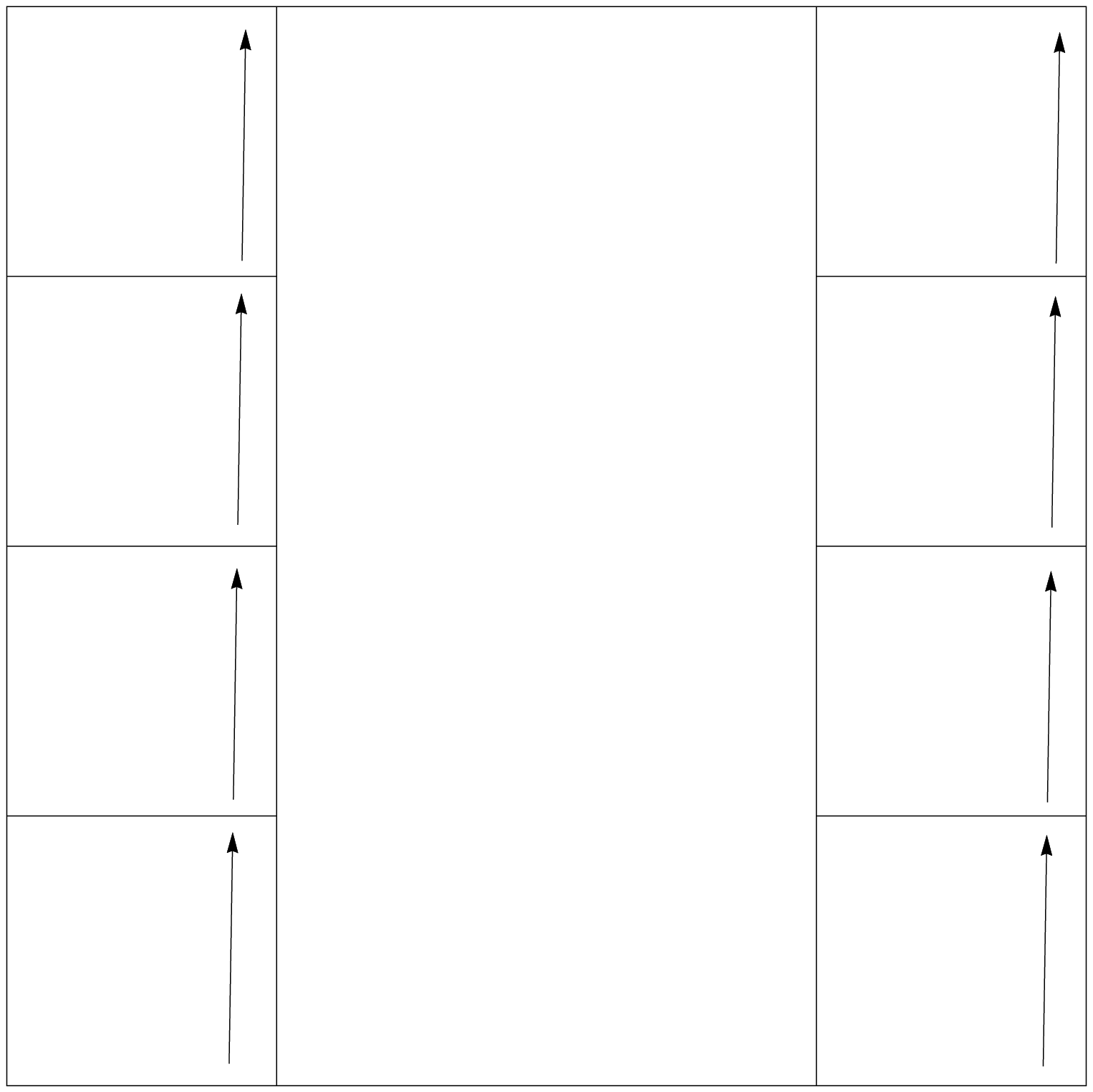}& \includegraphics[width=4.5cm]{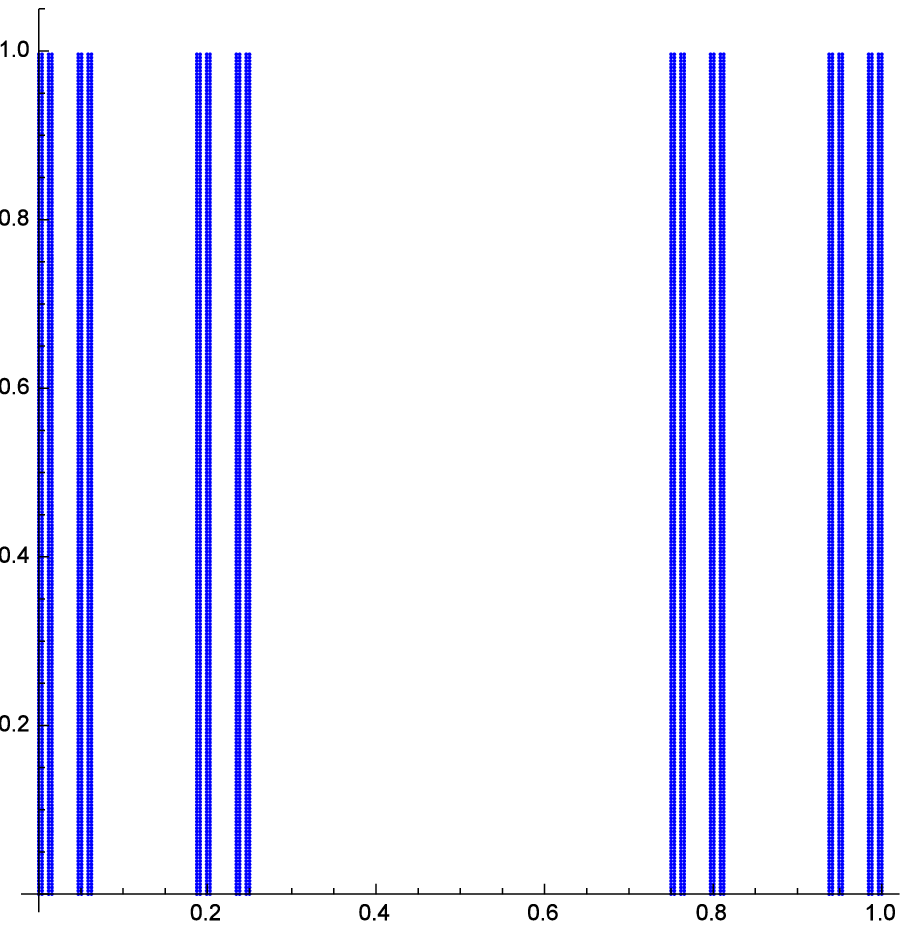}\end{tabular}
\end{center}\vskip -0.5cm
\caption{The IFS $\Fc_1$ and its attractor $E_1$.
}\label{cantor-comb}
\end{figure}
\end{exam}

\begin{exam}\label{comb+}
Let $\Fc_2=\Fc_1\cup\{g,h\}$ be an IFS as illustrated in Figure \ref{film} and $E_2$ the attractor.
It is seen that $x\times [0,1]$ is a component of $E_2$ for every $x\in \Kc$,
and from this one deduces that $E_2$ has a non-locally connected component $Q=h(\mathbb K)$, where $\displaystyle {\mathbb K}=(\Kc\times[0,1])\cup ([0,1]\times \{1\})$
is a typical example of non-locally connected continuum and will be called the \emph{Cantor comb}.
\begin{figure}[ht]
\begin{center}\vskip -0.1cm
\begin{tabular}{cc}
 \includegraphics[width=4.5cm]{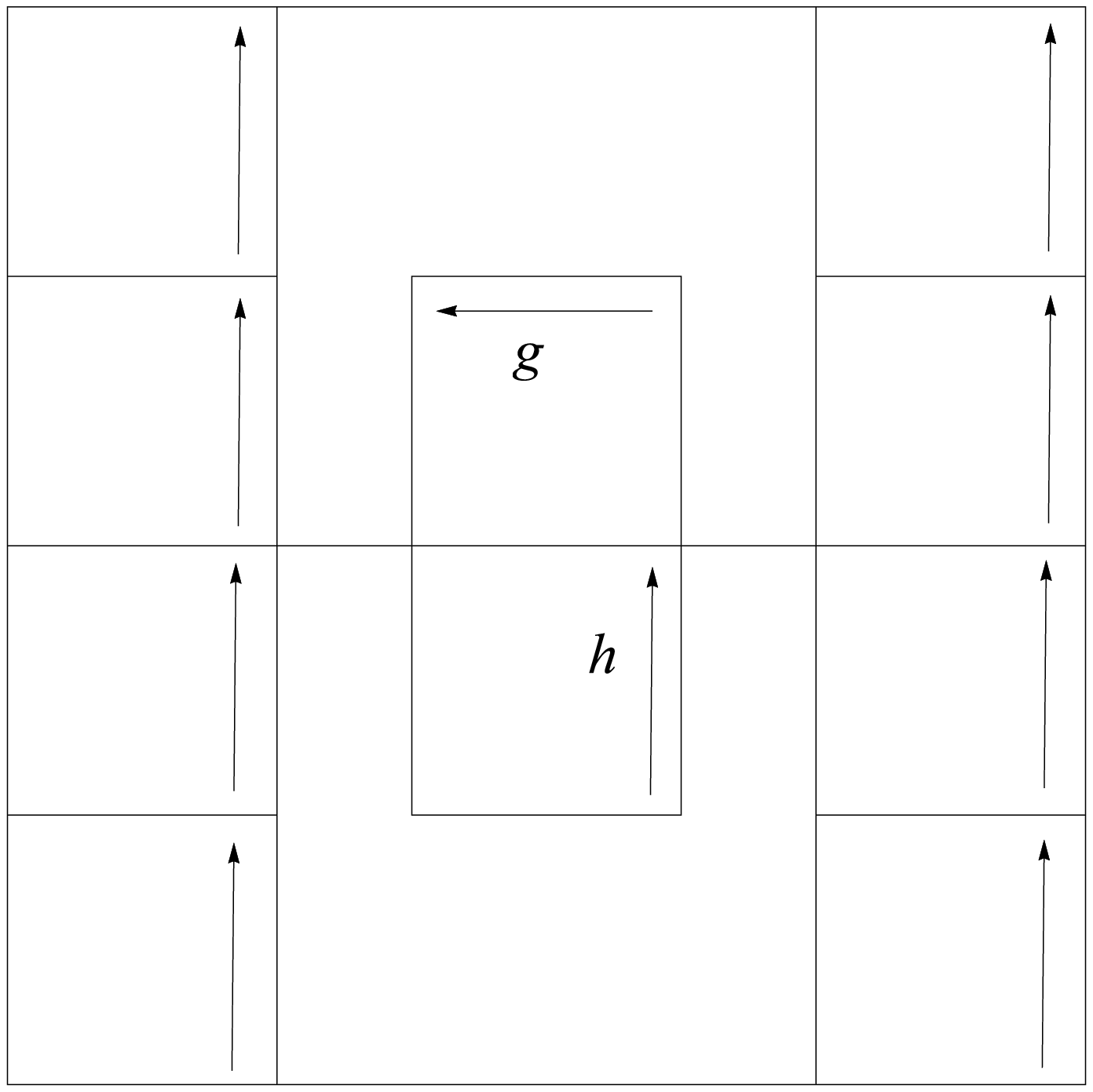}&
\includegraphics[width=4.5cm]{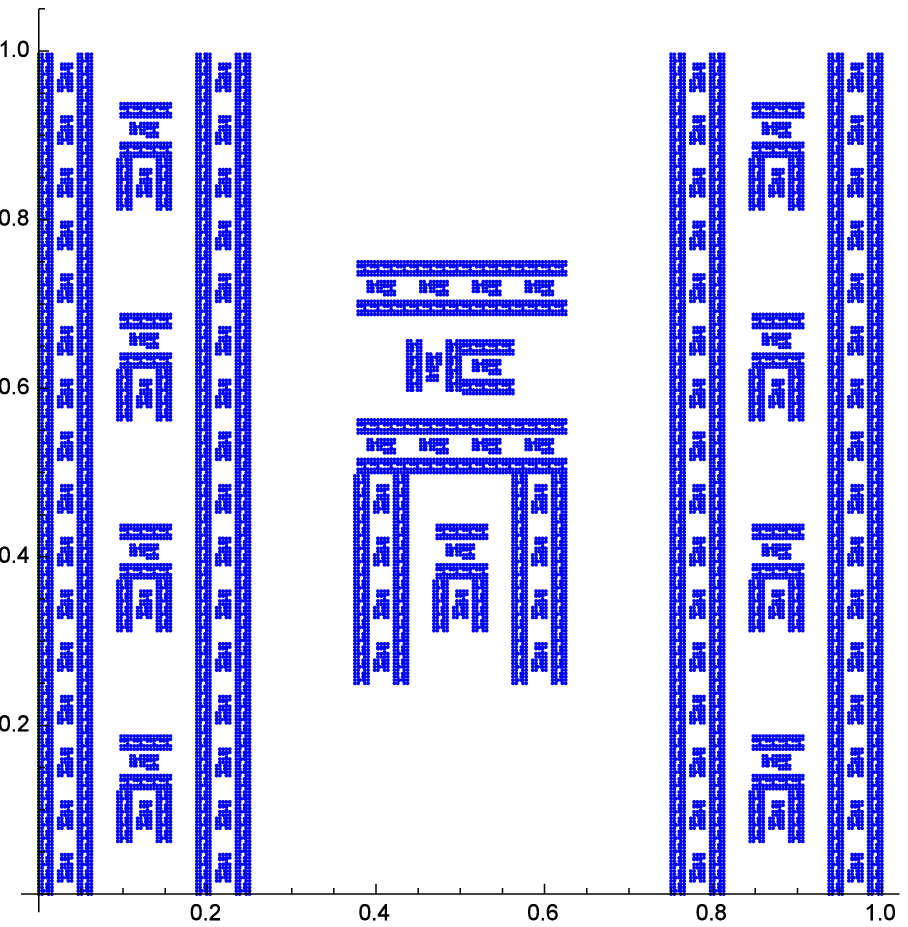}\end{tabular}
\end{center}\vskip -0.5cm
\caption{The IFS $\Fc_2$ and its attractor $E_2$}.
\label{film}
\end{figure}
\end{exam}

\begin{exam}\label{leaves}
Let $\Fc_3$ be the IFS \ illustrated in Figure \ref{leave}, where the $12$ small squares are all of size $1/4$.
Let $E_3$ be the attractor. See Figures \ref{leave} and \ref{leave-2}.  Then the component of $E_3$ containing $2+\frac34\mi$ is not locally connected. We put the proof in the appendix.
\begin{figure}[h]
\begin{center}\vskip -0.1cm
 \includegraphics[width=8cm]{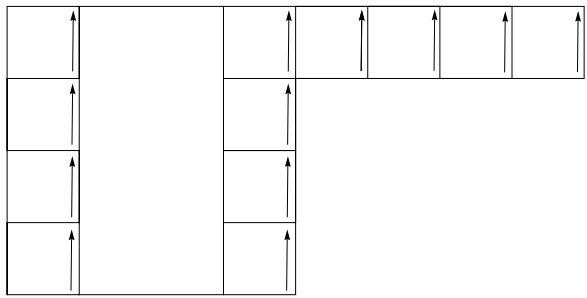}
 \end{center}\vskip -0.5cm
\caption{Images of the unit square under contractions $f\in\Fc_3$.
}\label{leave}
\end{figure}
\begin{figure}[h]
\begin{center}
\includegraphics[width=11cm]{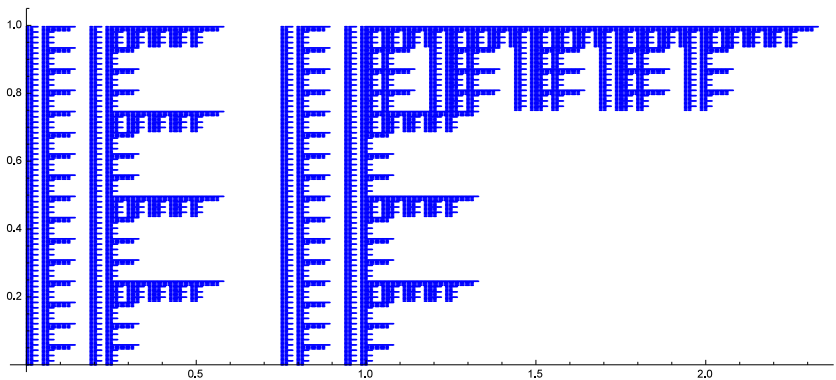}
 \end{center}\vskip -0.5cm
\caption{The self-similar set $E_3$ determined by $\Fc_3$.
}\label{leave-2}
\end{figure}
\end{exam}

From the examples we see that local connectedness is a delicate property for self-similar sets.
Nevertheless, in this paper, we present a positive result concerning fractal squares.

\begin{defi}[cf.~\cite{LLR}]
	Let $n\ge 2$ be an integer, and let  $\Dc$ be a subset of $\{0,1,\ldots,n-1\}^2$ with cardinality $\#\Dc\ge 2$. The attractor of the IFS
	$$\left\{ f_d(x)=\frac{1}{n}(x+d);~d\in \Dc\right \}$$
	is called a \emph{fractal square} and $\Dc$ is called the digit set for $F$.
\end{defi}

Motivated by the study of Lipschitz equivalence of self-similar sets, Xi and Xiong \cite{XX} and Roinestad \cite{Roi-2010} studied the topological  properties of fractal squares; in particular, several criteria of  total disconnectedness are given.
More recently Lau,Luo and Rao \cite{LLR} provide a more complete understanding on the topological structure of fractal squares by investigating $H=F+{\mathbb Z}^2$, a periodic extension of $F$.
Based on the topological classification of fractal squares in \cite{LLR}, we obtain the following result.

\begin{main-theorem}\label{main-3}
Every component of a fractal square is locally connected.
\end{main-theorem}


However, the above result fails in higher dimensions, as indicated by the following example.

\begin{exam}\label{fractal-cube}
Let $\Dc_1=\left\{\left.(i,0,2)\right| i=0,1,2\right\}$ and $\Dc_2=\left\{\left.(i,j, 0)\right| i=0,2; j=0,1,2\right\}$; let $\displaystyle f_d(x)=\frac{x+d}{3}$ for $d\in\Dc=\Dc_1\cup\Dc_2\cup\{(0,2,1)\}$; let $E_4$ be the attractor of the IFS $\mathcal F_4=\{f_d\}_{d\in\mathcal D}$. See Figure \ref{3D}.
We are going to show that a component of $E_4$  is homeomorphic with the Cantor comb mentioned in Example \ref{comb+}. 
\begin{figure}[h]
 \hskip -1.5cm
 \includegraphics[height=6cm]{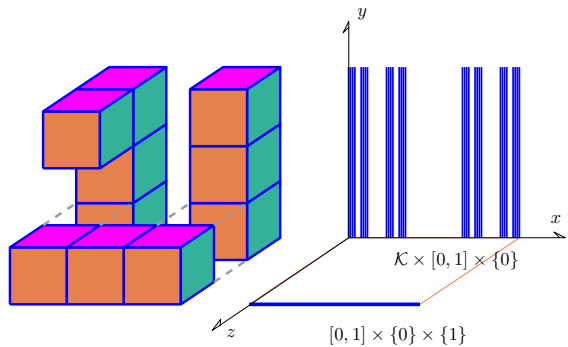}\\
\vskip -1.5cm
\caption{The IFS $\Fc_4$ and two $z$-sections of the attractor $E_4$.}
\label{3D}
\end{figure}

Notice that the projection $(x,y,z)\xrightarrow{P_1} z$ sends $E_4$ to the whole interval $[0,1]$. We will call  $E_4\cap \{z=z_0\}$ a \emph{$z$-section} of $E_4$ for each $z_0\in [0,1]$. Moreover,  the projection $(x,y,z)\xrightarrow{P_2} (y,z)$ maps $E_4$ to a fractal square $F$ on the $yz$-plane. The digit set for $F$ is
$${\mathcal F}=\{(0,0),(1,0),(2,0),(2,1),(0,2)\}.$$
We claim that every component of $F$ is either a single point or a line segment parallel to the $y$ axis. First, notice that  there is a broken line in $(0,1)^2\setminus F$ starting from
the point $(0, 11/18)$  and ending at the point $(1,11/18)$, see
 the dotted line in Figure \ref{p2-cube}.
\begin{figure}[h]
\begin{center}\vskip -0.25cm
 \includegraphics[height=4.5cm]{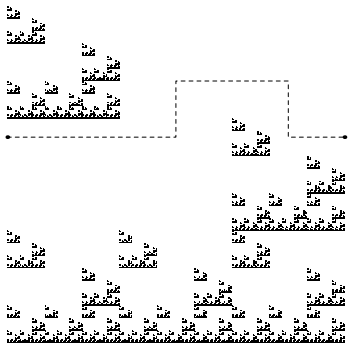}\\
\end{center}
\vskip -1.0cm
\caption{The attractor $F$ on the $yoz$-plane and the (dotted) broken line.}
\label{p2-cube}\vskip -0.25cm
\end{figure}
Let $H=F+\mathbb{Z}^2$,  then
$\mathbb{R}^2\setminus H$ has an unbounded component, for example the component containing
the dotted line in the above figure. Then, by Theorem 1.1 of \cite{LLR},   all nontrivial components of $F$ are line segments parallel to $\mathbb{R}\times\{0\}$. Consequently, every component of $E_4$ must be a subset of a single $z$-section. Since the $z$-sections with $z=1$ and $z=0$ are respectively $P_1^{-1}(1)=[0,1]\times\{0\}\times\{1\}$ and $P_1^{-1}(0)=\Kc\times [0,1]\times\{0\}$ (see Figure \ref{3D}), we see that the $z$-section $P_1^{-1}(1/3)$ is
 $$f_{(0,2,0)}\left(P_1^{-1}(1)\right)\cup f_{(0,2,1)}\left(P_2^{-1}(0)\right).$$
It is connected and hence is a component of $E_4$; moreover, it is homeomorphic to the Cantor comb ${\mathbb K}$ mentioned in Example \ref{comb+}.\qed
\end{exam}

By a classical result of Hahn-Mazurkiewicz (\cite[p.256, $\S$50, II, Theorem 2]{Kur68}), if a component of a self-similar set is locally connected then it must be path connected.
The self-similar sets in Examples 1.1-1.4 share the following property: all the components are path connected.
In Section 5 (Example \ref{non-path}), we give a self-similar set in ${\mathbb R}^3$ such that one of its components is not path connected.

We note that one of the mappings in $\Fc_2$ of Example \ref{comb+}  involves  a rotation, while
$\Fc_3$ in Example \ref{leaves} is an IFS that does not satisfy the open set condition.
(For the open set condition, we refer to Falconer \cite[p.129]{Fal}.)
Encouraged by these examples we state the following conjecture.

\begin{conj}\label{lc-component}
\emph{Let $\Fc$ be an IFS on $\bbR^2$  and $E$ its attractor. If $E$ satisfies the open set condition and if  every contraction in $\Fc$ involves neither rotation nor reflection,  then every component of $E$ is locally connected {\rm(path connected)}.}
\end{conj}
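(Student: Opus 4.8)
My plan is to reduce local connectedness of a single component to a scale-invariant combinatorial statement and then to try to derive that statement from the two hypotheses. Write $\Fc=\{f_1,\dots,f_q\}$, let $E$ be its attractor, and for a finite word $w=j_1\cdots j_k$ put $f_w=f_{j_1}\circ\cdots\circ f_{j_k}$ and $E_w=f_w(E)$. By Theorem~\ref{main-1}, if $E$ has finitely many components then it is a finite union of Peano continua and every component is locally connected; the only case of interest is thus that $E$ has infinitely many components, so that no single component is itself an attractor and Hata's theorem cannot be applied to it directly. Fixing a component $C$, I would aim to show that $C$ is \emph{uniformly locally connected}: for every $\epsilon>0$ there is $\delta>0$ such that any two points of $C$ at distance $<\delta$ lie in a connected subset of $C$ of diameter $<\epsilon$. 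For a compact continuum this is equivalent to local connectedness, and by the Hahn--Mazurkiewicz theorem quoted above it also yields path connectedness, which is the parenthetical assertion.

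The central device would be a renormalization argument built on self-similarity. Let $U$ be an open set witnessing the open set condition, so that the sets $f_w(U)$ are pairwise disjoint and two pieces $E_u,E_v$ can meet only along the common boundary of $f_u(\overline U)$ and $f_v(\overline U)$. At each level $k$ I would form the connection graph $G_k$ whose vertices are the level-$k$ pieces meeting $C$ and whose edges record the nonempty intersections; since $C$ is connected, $G_k$ is connected. Local connectedness of $C$ would then follow once one shows that, although $\diam E_w\to0$ uniformly in $w$, any two nearby points of $C$ can be joined inside $C$ through a short chain of edges of $G_k$. Here the hypothesis that no map involves a rotation or reflection is decisive: each $f_w$ is then a homothety, a positive scaling followed by a translation, so the relative placement of two intersecting pieces is a pure scaling-plus-translation and the intersection patterns do not twist as the scale shrinks. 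This is exactly the mechanism that fails for the rotated map of $\Fc_2$ in Example~\ref{comb+}, which is what manufactures the Cantor-comb component.

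The heart of the matter, and the step I expect to be the main obstacle, is a finiteness (finite-type) statement: up to rescaling there should be only finitely many ways in which pieces can be adjacent, so that the connecting chains in $G_k$ are governed by a fixed combinatorial rule independent of $k$. For fractal squares this finiteness is automatic, since all contraction ratios equal $1/n$ and all translations lie in $\bbZ^2$; that lattice structure is precisely what makes the periodic extension $H=F+\bbZ^2$ meaningful and lets the classification of \cite{LLR} drive the proof of Theorem~\ref{main-3}. For a general rotation-free IFS satisfying only the open set condition there is neither a common ratio nor a lattice, the periodic-extension machinery is unavailable, and the number of adjacency configurations need not be finite a priori. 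The crux is therefore either to prove that the open set condition together with the absence of rotations and reflections forces a finite-type structure---perhaps after passing to a high iterate of $\Fc$ and grouping pieces of comparable diameter---or to exhibit a scale-invariant invariant that bounds the connecting chains directly. I regard this as the genuinely hard and, so far, unresolved point, which is why the statement is posed as a conjecture rather than a theorem.

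Finally I would record that both hypotheses are indispensable and locate where any argument must invoke them, using the examples already at hand: Example~\ref{comb+} shows that permitting a rotation produces a non-locally-connected (Cantor-comb) component, while Example~\ref{leaves} shows that discarding the open set condition does the same. These two examples pin down the precise places in the scheme above where the hypotheses cannot be dropped.
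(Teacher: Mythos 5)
This statement is posed in the paper as Conjecture~\ref{lc-component}; the paper contains no proof of it, so there is nothing to compare your argument against, and your text is in any case a programme rather than a proof. The reductions you do carry out are sound: for a continuum, local connectedness is equivalent to uniform local connectedness, and Hahn--Mazurkiewicz then upgrades this to path connectedness, so it suffices to join nearby points of a component $C$ by small connected subsets of $C$; likewise the passage to connection graphs $G_k$ of level-$k$ pieces is a reasonable framework, and your observation that $E\subset\cl(U)$ for a feasible open set $U$ makes the adjacency-along-boundaries picture legitimate.

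The genuine gap is exactly the one you name: the finite-type claim that, up to rescaling, only finitely many adjacency configurations occur, so that connecting chains in $G_k$ have length (or total diameter) controlled uniformly in $k$. Nothing in the open set condition plus the absence of rotations and reflections is shown to force this; without a common contraction ratio or a lattice of translations, the set of relative positions of intersecting pieces can accumulate, and the chain-length bound --- which is the entire content of uniform local connectedness here --- is unproved. Note also that even granting finite type, you would still need an argument that short graph-distance in $G_k$ yields a connected subset of $C$ (not merely of $E$) of small diameter; the pieces in a chain need not all meet $C$ in a connected way. Your appeal to Examples~\ref{comb+} and~\ref{leaves} correctly shows both hypotheses are necessary, but necessity of hypotheses is not evidence of sufficiency. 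As it stands the proposal identifies the right obstacle and leaves it open, which is consistent with the statement remaining a conjecture.
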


The paper is organized as follows. Section \ref{basics} briefly reviews on topological notions and results related to local connectedness, and proves Theorem \ref{main-1}. Sections \ref{lemmas} and \ref{fractal-square} prepare a few useful lemmas and give the proof for Theorem \ref{main-3}. Section \ref{non-path} considers self-similar sets $E$ in $\bbR^3$.

\section{Preliminary on connectedness and local connectedness}\label{basics}

This section starts from a short review on notions and results of topology, which can be found in the book ``Dynamic Topology'' by Whyburn and Duda \cite{Whyburn-Duda}.


Let $X$ be a (topological) space and $Y$ a subset of $X$.
The {\em induced topology} on $Y$ consists of the sets $Y\cap W$  where $W$ runs over all the open set in  $X$. A decomposition $Y=A\cup B$ into nonempty sets $A,B$ with $A\cap\cl(B)=\cl(A)\cap B=\emptyset$ is called a {\em separation of $Y$}. A subset $Y$ is said to be {\em connected} if it admits no separation whatever; otherwise, it is disconnected.

Let $Y$ be a subset of a space $X$. A {\em component} of $Y$ is a maximal connected set $P\subset Y$
. If $x_0\in P$ we say that $P$ is the component of $Y$ crossing $x_0$. It is known that every component of a set $Y$ is closed in $Y$
\cite[p.35]{Whyburn-Duda}. If the subspace $Y$ is {\em locally connected} then the components of $Y$ are also open in $Y$. A {\em closed-open} set is both closed and open in a topological space; a component of a locally connected space is closed-open. One can easily show that a locally connected compact set $E$ has finitely many components, by using compactness of $E$.

\begin{defi}\label{lc}
A topological space $Y$ is locally connected at a point $x\in Y$ if the component of any open set $U_x\ni x$ crossing $x$ contains an open set $V_x$ with $x\in V_x$. The space $Y$ is locally connected if it is locally connected at all of its points.
\end{defi}

\begin{rem}
The above Definition \ref{lc} of local connectedness is equivalent to the one given in \cite[p.45]{Whyburn-Duda}, which defines a set $M$ (in a space $X$) to be locally connected at a point $p\in M$ provided that if $U_p$ is any open set about $p$, then there exists an open set $V_p$ with $p\in V_p\subset U_p$, such that each point of $M\cap V_p$ lies together with $p$ in a connected subset of $M\cap U_p$.
\end{rem}

\begin{defi}\label{peano}
A compact connected set in a metric space is called a {\em continuum}.  A locally connected continuum in a metric space is called a Peano continuum. A single point is considered as a {\em degenerate} Peano continuum.
\end{defi}

A set $M$ in a metric space is said to have {\em property {\bf S}} if for every $\epsilon>0$ the set $M$ is the union of a finite number of connected sets, each of diameter less than $\epsilon$. A set having property {\bf S} is locally connected \cite[p.48]{Whyburn-Duda}.

\begin{proof}[\textbf{Proof of Theorem \ref{main-1}.}] As we have mentioned before, if $E$ is a locally connected self-similar set, then $E$ has finitely many components since it is compact. It follows that the components of~$E$ are both compact and open. Thus these components are locally connected since so does~$E$. In other words, the components of~$E$ are Peano continua.

Suppose that $E$ is determined by $\{f_1,\ldots,f_q\}$ and that $E$ has finitely many components, say $P_1,\ldots,P_N$. It is easy to check that for any integer $n\ge1$ we have
\[E=\bigcup_{\alpha\in\{1,\ldots,q\}^n} f_\alpha(E),\]  where $\displaystyle f_{\alpha}=f_{j_1}\circ \cdots\circ f_{j_N}$ for any word $\alpha=j_1j_2\cdots j_N$ in $\{1,\ldots,q\}^n$. Let $r$ be the maximal ratio of the contractions $f_i$. Then \[\Uc_n:=\left\{f_\alpha(P_i): \alpha\in\{1,\ldots,q\}^n, 1\le i\le N\right\}\]
is a finite cover of $E$ by connected sets of diameter less than $r^n diam(E)$. 
Since $r^n diam(E)$ approaches $0$ as $n\rightarrow\infty$, the self-similar set $E$  is locally connected.
\end{proof}


%

\section{Two Lemmas}\label{lemmas}

In this section, we build two lemmas concerning components of compact sets on $\bbR^2$. Let us start with   the notion of quasi-component.

\begin{defi}\label{quasi}
The quasi-component of a space $X$ crossing a point $x\in X$ is the intersection of all closed-open sets containing $x$. In other words, it consists of all the points $y\in X$ such that there is no separation $X=A\cup B$ with $x\in A, y\in B$ \cite[p.148, $\S46$, V]{Kur68}.
\end{defi}

Given a space $X$, the component of $X$ crossing $x$ is contained in the quasi-component crossing $x$ \cite[Theorem 1, p.148, $\S46$, V]{Kur68}.  The converse is  not generally true. Actually, let
\[ X=\left(\bigcup_{n\ge1}[-1,1]\times\left\{\frac{1}{n}\right\}\right)\cup\{(-1,0), (1,0)\},\]
then  $(-1,0)$ and $(1,0)$ lie in the same quasi-component, although they belong to different components  \cite[pp.145-146, $\S46$, IV]{Kur68}. However, for a compact space we have:

\begin{theo}{\rm\cite[p.169, $\S47$, II, Theorem 2]{Kur68}}\label{O}
In compact spaces the quasi-components coincide with the components.
\end{theo}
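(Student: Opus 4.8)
The plan is to establish the one nontrivial inclusion. As already recorded just before the statement, the component $C$ of $X$ through $x$ is always contained in the quasi-component $Q$ through $x$; so in the compact case it remains only to prove $Q\subseteq C$. Since $C$ is by definition the maximal connected subset of $X$ containing $x$, it suffices to show that $Q$ itself is connected: then $Q$ is a connected set containing $x$, maximality forces $Q\subseteq C$, and combining with $C\subseteq Q$ yields $Q=C$. So I would reduce the whole theorem to the single assertion that \emph{in a compact space every quasi-component is connected}.

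A preliminary observation is that $Q$ is closed: by definition $Q$ is the intersection of all closed-open subsets of $X$ containing $x$, and each such set is in particular closed, so $Q$ is closed and hence compact. Moreover, if $Q=A\cup B$ is a separation in the sense defined above with $x\in A$, then because $Q$ is closed in $X$ one checks that $\cl(A)\subseteq A$ and $\cl(B)\subseteq B$, so $A$ and $B$ are themselves disjoint nonempty compact sets.

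The heart of the matter is a proof by contradiction driven by compactness. Suppose such a separation $Q=A\cup B$ exists. Working in a metric (hence normal) space, I would choose disjoint open sets $U\supseteq A$ and $V\supseteq B$, so that $Q\subseteq U\cup V$. Writing $Q=\bigcap_i F_i$ with the $F_i$ ranging over all closed-open sets containing $x$, the closed sets $F_i\cap\bigl(X\setminus(U\cup V)\bigr)$ have empty total intersection; by the finite intersection property of the compact space $X$, finitely many of them already intersect in the empty set. Their intersection $F:=F_{i_1}\cap\cdots\cap F_{i_k}$ is then a closed-open set containing $x$ with $F\subseteq U\cup V$. This passage from the infinite intersection $\bigcap_i F_i$ to a single closed-open set $F$ trapped between $Q$ and $U\cup V$ is the main obstacle, and the only place where compactness is genuinely used.

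Once such an $F$ is in hand, the conclusion is routine. Because $U$ and $V$ are disjoint and $F\subseteq U\cup V$, one has $F\cap U=F\setminus V$, which is open (as the intersection of the open sets $F$ and $U$) and also closed (as the intersection of the closed sets $F$ and $X\setminus V$); thus $F\cap U$ is a closed-open set containing $x$. Consequently $Q\subseteq F\cap U\subseteq U$, so $B\subseteq Q\subseteq U$. But $B\subseteq V$ with $U\cap V=\emptyset$ forces $B=\emptyset$, contradicting the choice of the separation. Hence $Q$ admits no separation, $Q$ is connected, and the desired equality $Q=C$ follows.
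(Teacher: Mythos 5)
Your proof is correct. The paper itself gives no argument for this theorem --- it is quoted verbatim from Kuratowski with a page reference --- so there is nothing internal to compare against; what you have written is the standard textbook proof (reduce to showing the quasi-component $Q$ is connected, separate a putative decomposition $Q=A\cup B$ by disjoint open sets $U,V$, use compactness to replace the infinite intersection $\bigcap_i F_i$ by a single closed-open $F\subseteq U\cup V$, and observe that $F\cap U$ is then a closed-open set containing $x$ but missing $B$). Two minor remarks: your appeal to ``metric, hence normal'' is harmless but not needed in that form, since every compact Hausdorff space is normal and that is all the separation step requires (and in this paper the theorem is only ever applied to compact subsets of $\bbR^n$); and your observation that $A$ and $B$ are closed follows, as you say, from $\cl(A)\cap Q=A$ together with $Q$ being closed, so the separated pair is genuinely a pair of disjoint compacta to which normality applies.
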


Our first lemma is a pasting lemma.

\begin{lemm}[\textbf{Pasting lemma}]\label{A}
Let $Y\subset\bbR^n(n\ge2)$ be a compact set and $Y_1$ be a component of $Y$, let
 $Z\subset \bbR^n$ be a compact set such that  $Y_1\cap Z=\emptyset$. Then $Y_1$ is also a component of $Y\cup Z$.
\end{lemm}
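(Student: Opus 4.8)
The plan is to exploit Theorem \ref{O}: since $Y$ is compact, the component $Y_1$ is also a quasi-component of $Y$, and this lets me isolate $Y_1$ from $Z$ by a single closed-open set of $Y$. Concretely, I would first record the standard consequence of quasi-components (Definition \ref{quasi}): fixing a point $x\in Y_1$, for every $y\in Y\setminus Y_1$ there is a closed-open subset of $Y$ containing $x$ but missing $y$; because $Y_1$ is connected, such a set must in fact contain all of $Y_1$. Passing to complements yields, for each such $y$, a closed-open neighborhood of $y$ in $Y$ that is disjoint from $Y_1$.

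Next I would separate $Y_1$ from $Z$ by compactness. The set $Y\cap Z$ is compact and, by hypothesis, disjoint from $Y_1$. Covering $Y\cap Z$ by the closed-open sets just produced and extracting a finite subcover, I obtain a closed-open subset $V$ of $Y$ with $Y\cap Z\subset V$ and $V\cap Y_1=\emptyset$. Its complement $U:=Y\setminus V$ is then closed-open in $Y$, contains $Y_1$, and satisfies $U\cap Z=\emptyset$; the last equality holds because $U\subset Y$ forces $U\cap Z=U\cap(Y\cap Z)=\emptyset$.

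The key step is to promote $U$ from a closed-open subset of $Y$ to a closed-open subset of $Y\cup Z$. For this I would observe that $U$ and $(Y\setminus U)\cup Z$ are each compact (finite unions of compact sets), are disjoint (using $U\cap Z=\emptyset$), and have union $Y\cup Z$. Two disjoint compact sets whose union is $Y\cup Z$ are each the complement of the other, hence each is closed-open in $Y\cup Z$; in particular $U$ is a closed-open subset of $Y\cup Z$ containing $Y_1$. I expect this promotion to be the main obstacle, since $Z$ is allowed to meet $Y\setminus U$, and it is precisely the compactness of $Z$ that keeps $(Y\setminus U)\cup Z$ closed.

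Finally I would conclude as follows. Let $P$ be the component of $Y\cup Z$ containing the connected set $Y_1$. Since $U$ is closed-open in $Y\cup Z$ and meets $P$ (in $Y_1$), connectedness of $P$ forces $P\subset U\subset Y$. But a connected subset of $Y$ containing $Y_1$ lies in a single component of $Y$, and that component, containing the maximal connected set $Y_1$, must equal $Y_1$; hence $P\subset Y_1$ and therefore $P=Y_1$. This shows that $Y_1$ is a component of $Y\cup Z$, as required. Note that the argument uses only the compactness of $Y$ and $Z$ together with Theorem \ref{O}, and not the dimension hypothesis $n\ge2$.
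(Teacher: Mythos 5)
Your proposal is correct and follows essentially the same route as the paper: both use Theorem \ref{O} to identify $Y_1$ with a quasi-component of $Y$, use compactness of $Y\cap Z$ to produce a single closed-open set $A=U$ of $Y$ containing $Y_1$ and missing $Y\cap Z$, and then observe that $Y\cup Z=U\cup\bigl((Y\setminus U)\cup Z\bigr)$ is a separation. You merely spell out in more detail the covering argument and the final identification of $Y_1$ as a component of $Y\cup Z$, which the paper compresses into one sentence.
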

\begin{proof} We only need to consider the case $Z\cap Y\ne\emptyset$.
By Theorem \ref{O}, the set $Y_1$ is   a quasi-component of $Y$. As $Y\cap Z$ is a compact set disjoint from $Y_1$, we can find a separation $Y=A\cup B$ with $Y_1\subset A$ and $A\cap(Y\cap Z)=\emptyset$. Clearly, $Y\cup Z=A\cup(B\cup Z)$ is a separation. Since $Y_1$ is a quasi-component of $A$, it is also a quasi-component of $Y\cup Z$.
\end{proof}

Our second lemma is based on the following.
\begin{theo}{\rm\cite[Lemma 2.1]{LAT}}\label{brick-wall}
Suppose that $A$, $B$ are disjoint closed sets with
$A \subset  [0,1)\times[0,1]$ and $B\subset (0,1]\times [0,1]$. Then there exists a path in $([0,1]\times[0,1])\setminus(A\cup B)$ starting from a point in $(0,1)\times\{0\}$ and leading to a
point in  $(0,1)\times\{1\}$.
\end{theo}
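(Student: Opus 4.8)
The plan is to reduce the statement to Janiszewski's classical separation theorem in the sphere. First I would enlarge $A$ and $B$ into two closed ``walls'' that isolate the left and right edges. Put
$$X = A \cup \big((-\infty,0]\times[0,1]\big), \qquad Y = B \cup \big([1,\infty)\times[0,1]\big),$$
so that $X$ contains the closed left half-strip and $Y$ the closed right half-strip. The hypotheses $A\subset[0,1)\times[0,1]$ and $B\subset(0,1]\times[0,1]$ together with $A\cap B=\emptyset$ force $X\cap Y=\emptyset$ in $\bbR^2$: the set $A$ never reaches $x\ge 1$, the set $B$ never reaches $x\le 0$, and the two half-strips are disjoint. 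Compactifying by one point, I would work in $\bbS^2=\bbR^2\cup\{\infty\}$ and set $X_\ast=X\cup\{\infty\}$ and $Y_\ast=Y\cup\{\infty\}$; these are closed in $\bbS^2$ (their complements are open planar sets) and $X_\ast\cap Y_\ast=\{\infty\}$ is a single point, hence connected.

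Next I would fix auxiliary points $p=(1/2,-1)$ and $q=(1/2,2)$ lying below and above the square, and observe that neither wall separates $p$ from $q$ in $\bbS^2$. Indeed $X\subset\{x<1\}$, so the rectangular path $p\to(2,-1)\to(2,2)\to q$ avoids $X$ (its vertical leg sits at $x=2\ge 1$, its horizontal legs sit at $y=-1$ and $y=2$ outside the strip $0\le y\le 1$); thus $p,q$ lie in one component of $\bbS^2\setminus X_\ast$. Symmetrically $Y\subset\{x>0\}$, and a rectangular path through $x=-1$ shows $p,q$ lie in one component of $\bbS^2\setminus Y_\ast$. Since $X_\ast\cap Y_\ast$ is connected and neither set separates $p$ from $q$, Janiszewski's theorem (\cite[\S47]{Kur68}) gives that $X_\ast\cup Y_\ast$ does not separate $p$ from $q$. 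As connected open subsets of the surface $\bbS^2$ are path connected, there is a path $\gamma$ from $p$ to $q$ avoiding $X_\ast\cup Y_\ast$; in particular $\gamma$ avoids $A\cup B$ and both half-strips, and avoids $\infty$, so $\gamma\subset\bbR^2$.

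Finally I would extract the crossing. Writing $\gamma(t)=(x(t),y(t))$ with $y(0)=-1$ and $y(1)=2$, set $s_0=\sup\{t:y(t)\le 0\}$ and $s_1=\inf\{t>s_0:y(t)\ge 1\}$. Continuity gives $y(s_0)=0$ and $y(s_1)=1$, and on $[s_0,s_1]$ one has $0\le y(t)\le 1$; since $\gamma$ avoids both half-strips, this forces $0<x(t)<1$ throughout $[s_0,s_1]$. Hence $\gamma|_{[s_0,s_1]}$ is a path inside $[0,1]\times[0,1]$, disjoint from $A\cup B$, with $\gamma(s_0)\in(0,1)\times\{0\}$ and $\gamma(s_1)\in(0,1)\times\{1\}$, which is exactly the required path.

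The routine parts are the disjointness computations and the continuity bookkeeping in the extraction. The real content is the separation principle, and I expect the main obstacle to be the careful treatment of the point at infinity so that $X_\ast\cap Y_\ast$ is exactly the single (connected) point $\{\infty\}$ and the walls remain closed, after which Janiszewski's theorem does the work of converting ``no connected left-to-right blockage inside $A\cup B$'' into ``a bottom-to-top passage in the complement.'' A more elementary alternative, avoiding the sphere, would replace this step by a direct Hex-type crossing dichotomy for the square, proved by discretizing at a mesh below $\mathrm{dist}(A,B)>0$; I would favour the Janiszewski route as the shortest to make fully rigorous.
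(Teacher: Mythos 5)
Your argument is correct, but note that the paper does not prove this statement at all: it is quoted verbatim from \cite[Lemma~2.1]{LAT}, so there is no in-paper proof to match, and the label ``brick-wall'' reflects the elementary paving/crossing argument used in that source --- essentially the discretized Hex-type dichotomy you mention as your fallback. Your route through Janiszewski's second theorem is a genuinely different and perfectly sound derivation: the walls $X=A\cup\bigl((-\infty,0]\times[0,1]\bigr)$ and $Y=B\cup\bigl([1,\infty)\times[0,1]\bigr)$ are disjoint closed sets whose one-point compactifications meet exactly in $\{\infty\}$, the explicit rectangular detours at $x=\pm 1$ (outside the strip $0\le y\le 1$) show neither wall separates $p=(1/2,-1)$ from $q=(1/2,2)$, and the extraction of the sub-arc via $s_0=\sup\{t:y(t)\le 0\}$ and $s_1=\inf\{t>s_0:y(t)\ge 1\}$ correctly pins the crossing inside $(0,1)\times[0,1]$ because the path avoids both half-strips. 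The only blemishes are cosmetic: the Janiszewski theorem sits in \cite[\S 61]{Kur68} rather than \S 47, and you should say explicitly that $A$ and $B$ are compact (closed in $[0,1]^2$, hence closed in $\mathbb{R}^2$) so that $X$ and $Y$ are closed and their closures in $\mathbb{S}^2$ add only the point at infinity. What the Janiszewski route buys is brevity and no mesh bookkeeping; what the brick-wall route buys is self-containedness, avoiding plane-topology machinery --- either is acceptable here since the present paper treats the statement as a black box.
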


\begin{lemm}[\textbf{Separation lemma}]\label{B}
Let $A,B\subset[0,1]^2$ be disjoint nonempty compact sets. If a component $P$ of $A$ and a component $Q$ of $B$ both intersect $\partial[0,1]^2$, then we can find an open arc $\gamma$ in $(0,1)^2\setminus(A\cup B)$ 
such that the two components of $[0,1]^2\setminus\cl(\gamma)$
contain $P$ and $Q$ respectively.
\end{lemm}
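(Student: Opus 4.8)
The plan is to reduce the statement to the brick-wall theorem (Theorem \ref{brick-wall}) after a change of coordinates dictated by where $P$ and $Q$ meet the boundary. First I would observe that, since $A$ and $B$ are disjoint compact sets, each is closed-open in $A\cup B$, so the components of $A\cup B$ are precisely those of $A$ together with those of $B$; in particular $P$ and $Q$ are two distinct components of the compact set $A\cup B$. By Theorem \ref{O} they are quasi-components, so there is a closed-open (in $A\cup B$) set $M$ with $P\subseteq M$ and $Q\cap M=\emptyset$; setting $N=(A\cup B)\setminus M$ gives a partition $A\cup B=M\cup N$ into disjoint nonempty compact sets with $P\subseteq M$ and $Q\subseteq N$. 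Note that $M$ and $N$ each meet $\partial[0,1]^2$ (because $P$ and $Q$ do) and that $P\cap Q=\emptyset$.

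Next I would fix points $p\in P\cap\partial[0,1]^2$ and $q\in Q\cap\partial[0,1]^2$; since $p\in M$ and $q\in N$ we have $p\ne q$. Because $q\notin M$ and $M\cap\partial[0,1]^2$ is compact, I can choose a short boundary arc $R\ni q$ with $R\cap M=\emptyset$, and symmetrically a short boundary arc $L\ni p$ with $L\cap N=\emptyset$, the two arcs being disjoint. Writing $T$ and $S$ for the closures of the two arcs of $\partial[0,1]^2\setminus(L\cup R)$, the circle $\partial[0,1]^2$ is cut into four consecutive arcs $L,T,R,S$, and I would take a homeomorphism $\phi\colon[0,1]^2\to[0,1]^2$ carrying $L,R,T,S$ onto the left, right, top and bottom edges respectively (corners to corners, then extended over the disk). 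By construction $\phi(M)$ misses the right edge and $\phi(N)$ misses the left edge, so $\phi(M)\subset[0,1)\times[0,1]$ and $\phi(N)\subset(0,1]\times[0,1]$ are disjoint compact sets. Theorem \ref{brick-wall} then yields a path in $[0,1]^2\setminus\phi(A\cup B)$ joining the bottom edge to the top edge, which after the usual trimming I may take to be a simple arc $\sigma$ whose interior lies in $(0,1)^2$ and whose endpoints lie in the interiors of the bottom and top edges.

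Finally I would check that $\sigma$ already separates $P$ and $Q$, even though Theorem \ref{brick-wall} only supplies a path avoiding $A\cup B$. The crosscut $\sigma$ splits $[0,1]^2$ into a component $D_L$ containing the open left edge and a component $D_R$ containing the open right edge. Since $\phi(P)$ is connected and disjoint from $\sigma$, it lies in a single component; as $p\in P\cap L$, the set $\phi(P)$ meets the left edge and hence $\phi(P)\subseteq D_L$, and symmetrically $\phi(Q)\subseteq D_R$. Pulling back, $\gamma:=\phi^{-1}(\sigma\setminus\partial[0,1]^2)$ is an open arc in $(0,1)^2\setminus(A\cup B)$ whose closure $\cl(\gamma)=\phi^{-1}(\sigma)$ is a crosscut, and the two components of $[0,1]^2\setminus\cl(\gamma)$ contain $P$ and $Q$ respectively. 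The step on which the argument really turns—and the one I would treat most carefully—is this last separation claim: the brick-wall theorem does not by itself separate $A$ from $B$, and it is only the combination of the connectedness of $P$ and $Q$ with the placement of $p,q$ on opposite edges (and of all of $M,N$ on the correct sides, which is exactly why the closed-open splitting is needed to avoid the whole of $A\cup B$) that forces $\sigma$ to be the desired separating arc.
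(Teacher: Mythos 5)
Your proof is correct and follows essentially the same route as the paper: normalize by a homeomorphism of the square, produce a closed-open splitting $M\cup N$ of $A\cup B$ with $P\subset M$ and $Q\subset N$ via Theorem \ref{O}, and feed it to Theorem \ref{brick-wall}. You are in fact somewhat more careful than the paper at two points: choosing the homeomorphism via the small boundary arcs $L\ni p$ and $R\ni q$ guarantees that the splitting really satisfies the positional hypotheses $\phi(M)\subset[0,1)\times[0,1]$ and $\phi(N)\subset(0,1]\times[0,1]$ (the paper only normalizes $P\cap\partial[0,1]^2$ and $Q\cap\partial[0,1]^2$ and then asserts such a separation $A'\cup B'$ exists, which is not automatic if some third component of $A\cup B$ meets both the left and right edges), and you verify explicitly that the resulting crosscut separates $P$ from $Q$, a step the paper dismisses as clear.
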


\begin{proof}
As $P$ and $Q$ are components of $A\cup B$, by a homeomorphism we can transform our situation  in a way that
$\left(P \cap \partial[0,1]^2\right) \subset \{0\} \times (0,1)$ and $\left(Q \cap \partial[0,1]^2\right) \subset \{1\} \times (0,1)$. Fix a separation $A\cup B= A'\cup B'$ with $P\subset A'\subset [0,1)\times[0,1]$ and $Q\subset B'\subset (0,1]\times [0,1]$.  Then we can use Theorem \ref{brick-wall} to get an open arc $\gamma$ connecting a point on $(0,1) \times \{0\}$ to a point on $(0,1) \times \{1\}$ which is outside $A'\cup B'=A\cup B$. This arc $\gamma$ is clearly what we need.
\end{proof}

\section{On components of a fractal square}\label{fractal-square}
In this section, we  prove  Theorem \ref{main-3}.

Let $\Fc_\Dc:=\left\{f_d(x)=\frac{x+d}{n}: d\in\Dc\right\}$ be the IFS determining the fractal square $F$. Let $F_0$ be the unit square, and set $F_{k}=(F_{k-1}+\Dc)/n$ for all $k\ge1$. For each $k\ge1$ and $1\le i,j\le n^k$, a square of the form $\left[\frac{i}{n^k},\frac{i+1}{n^k}\right]\times
\left[\frac{j}{n^k},\frac{j+1}{n^k}\right]$ is called a $k$-cell.
Let $H=F+\bbZ^2$ and $H_k=F_k+\bbZ^2$ be the periodic extension of $F$ and $F_k$, respectively. We have
\begin{lemm}[cf.~{\cite[Equations (2.1) and (2.2)]{LLR}}]
$H_{k+1}\subset H_k/n$, $H\subset H/n$, $H_k^c/n\subset H_{k+1}^c$ and $H^c/n\subset H^c$ hold for $k\ge1$.
\end{lemm}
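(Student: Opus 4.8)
The plan is to notice that the four assertions are really only two: each is stated once as a direct inclusion and once as its set-complement counterpart, and the two genuine inclusions follow from a single Minkowski-sum manipulation. The only arithmetic input is that the digit set lies in the lattice: since $\Dc\subset\{0,\ldots,n-1\}^2\subset\bbZ^2$, we have $\Dc+n\bbZ^2\subset\bbZ^2$.

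First I would record that scaling by $1/n$ is a homeomorphism of $\bbR^2$, so that $(V/n)^c=V^c/n$ for every set $V$. Taking complements of both sides of an inclusion reverses it, so $U\subset V/n$ is equivalent to $V^c/n=(V/n)^c\subset U^c$. Applying this with $(U,V)=(H_{k+1},H_k)$ shows that the third assertion $H_k^c/n\subset H_{k+1}^c$ is merely a restatement of the first, and applying it with $(U,V)=(H,H)$ shows that the fourth assertion $H^c/n\subset H^c$ is a restatement of the second. Thus it suffices to prove $H_{k+1}\subset H_k/n$ and $H\subset H/n$.

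For the first of these I would use the defining recursion $F_{k+1}=(F_k+\Dc)/n$ together with the elementary identity $\frac{F_k+\Dc}{n}+\bbZ^2=\frac{F_k+\Dc+n\bbZ^2}{n}$, and then absorb the digits into the lattice:
\[
H_{k+1}=F_{k+1}+\bbZ^2=\frac{F_k+\Dc}{n}+\bbZ^2=\frac{F_k+\Dc+n\bbZ^2}{n}\subset\frac{F_k+\bbZ^2}{n}=\frac{H_k}{n},
\]
where the inclusion is exactly $\Dc+n\bbZ^2\subset\bbZ^2$. The inclusion $H\subset H/n$ is formally identical, with the self-similarity relation $F=(F+\Dc)/n$ playing the role of the recursion:
\[
H=F+\bbZ^2=\frac{F+\Dc}{n}+\bbZ^2=\frac{F+\Dc+n\bbZ^2}{n}\subset\frac{F+\bbZ^2}{n}=\frac{H}{n}.
\]

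There is no serious obstacle here; the statement is a bookkeeping lemma that prepares the later topological arguments, and the content reduces to the single observation $\Dc+n\bbZ^2\subset\bbZ^2$. The one point to get right is the complementation step, since it is what lets the two complement-inclusions come for free rather than being argued separately; I would state $(V/n)^c=V^c/n$ explicitly to keep the equivalences transparent. (Alternatively, the second pair could be deduced from the first by letting $k\to\infty$, using that $F_k$ decreases to $F$ and hence $\bigcap_k H_k=H$; but the direct computation above is cleaner since it avoids justifying that the intersection commutes with the periodic extension.)
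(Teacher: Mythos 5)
Your proof is correct. The paper itself gives no argument for this lemma (it is cited as ``cf.~Equations (2.1) and (2.2) of \cite{LLR}''), so there is no in-paper proof to compare against; your verification --- reducing the two complement statements to the two direct inclusions via $(V/n)^c=V^c/n$, and then using $F_{k+1}=(F_k+\Dc)/n$, $F=(F+\Dc)/n$ together with $\Dc+n\bbZ^2\subset\bbZ^2$ --- is exactly the standard computation behind the cited equations and has no gaps.
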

\begin{lemm}[cf.~{\cite[Theorem 2.2]{LLR}}]\label{basic-fact}
Every component of $H^c:=\mathbb{R}^2\setminus H$ is unbounded if one of its  component is of diameter greater than  $\sqrt{2}(n^2+1)^2/n$, and in this case each component of $F$ is either a line segment, or a single point.
\end{lemm}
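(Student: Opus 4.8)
The assertion has two separate parts: the \emph{dichotomy} that $H^c$ admits a component of diameter exceeding $T:=\sqrt{2}(n^2+1)^2/n$ only when \emph{every} component of $H^c$ is unbounded, and the \emph{structure statement} that in this situation each component of $F$ is a segment or a single point. The plan is to read both parts off the torus $\mathbb{T}^2=\bbR^2/\bbZ^2$. Writing $\pi\colon\bbR^2\to\mathbb{T}^2$ for the quotient, periodicity of $H$ gives $\pi(H^c)=\mathbb{T}^2\setminus\pi(H)$, an open set; a bounded component of $H^c$ is exactly a lift of a null-homotopic component of $\pi(H^c)$, while an unbounded component is a lift of a component carrying a nontrivial class in $H_1(\mathbb{T}^2)$, and is therefore invariant under a sublattice $G\leq\bbZ^2$ of rank at least $1$ with a primitive wrapping direction $v$. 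Thus the diameter threshold is a device for detecting a non-contractible component, and ``all unbounded'' means ``no contractible component survives''.

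For the dichotomy I would work with the cellular approximations $H^c=\bigcup_{k\geq1}H_k^c$, where each $H_k^c$ is a union of open $k$-cells, together with the scaling inclusions $H^c/n\subset H^c$ and $H_k^c/n\subset H_{k+1}^c$. First I would bound the diameter of any \emph{bounded} component: such a component projects to a contractible component of $\pi(H^c)$, hence is enclosed by a cycle of cells of $\pi(H)$; because the cellular pattern of $F$ is generated by the single digit set $\Dc$, only finitely many local cell-adjacency types can occur, and this caps the number of cells such an enclosing cycle, and the region it bounds, can contain. Here the factor $\sqrt{2}/n$ is the diameter of a $1$-cell and $(n^2+1)^2$ is the resulting cell count, which together yield the explicit bound $T$; a component of diameter greater than $T$ can then not be contractible, so it is unbounded. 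To upgrade ``one unbounded'' to ``all unbounded'', I would pump with $H^c/n\subset H^c$: a wrapping channel, once present, reappears rescaled by $1/n$ inside every occupied cell, and iterating fills $\mathbb{T}^2\setminus\pi(H)$ so densely in the direction transverse to $v$ that no null-homotopic component can persist.

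For the structure statement I would fix an unbounded component $W$ of $H^c$ and its wrapping direction $v$. By periodicity $W$ contains a bi-infinite strip-like region parallel to $v$ that separates $\bbR^2$, and the inclusions $H^c/n\subset H^c$ and $H_k^c/n\subset H_{k+1}^c$ reproduce a copy of this separating channel, parallel to $v$ and of transverse width $O(n^{-k})$, inside the $k$-cells meeting $F$. To turn these into honest separators of $F$ I would invoke the Separation Lemma (Lemma~\ref{B}) and the brick-wall Theorem~\ref{brick-wall} cell by cell, obtaining open arcs in the cell interiors, avoiding $F$, that split off the prescribed boundary-touching pieces; the Pasting Lemma (Lemma~\ref{A}) then guarantees that the resulting pieces remain components of $F$. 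Since the arcs at scale $n^{-k}$ separate any two points of $F$ whose displacement transverse to $v$ exceeds $c\,n^{-k}$, letting $k\to\infty$ forces every component $P$ of $F$ to have zero transverse extent: $P$ lies on a line parallel to $v$ and, being a connected subset of such a line, is a single point or a line segment. If $H^c$ instead wraps in two independent lattice directions, the same slicing performed in both directions leaves only singletons.

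The main obstacle is concentrated in the second paragraph: calibrating the exact threshold $T$ and, above all, excluding the coexistence of bounded and unbounded complementary components, which is where the self-similar structure must be used essentially rather than mere periodicity. By contrast, the topological slicing of the third paragraph is comparatively routine once the multiscale separating channels are known to be genuine subsets of $H^c$, rather than regions refilled by other parts of $F$; verifying this last point is exactly what the inclusions $H_{k+1}\subset H_k/n$ and $H_k^c/n\subset H_{k+1}^c$ are designed to control.
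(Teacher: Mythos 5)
There is nothing in the paper to compare your argument with: Lemma~\ref{basic-fact} is stated with the citation ``cf.~Theorem~2.2 of \cite{LLR}'' and no proof is given in this paper --- it is imported as a black box from Lau--Luo--Rao, and the surrounding machinery here (Lemmas~\ref{A} and \ref{B}, Theorem~\ref{brick-wall}) is developed in order to \emph{deduce} Theorem~\ref{main-3} \emph{from} this lemma, not to prove the lemma itself. So your proposal has to stand on its own, and as written it has gaps at each of the load-bearing steps.

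First, the torus dictionary you start from --- bounded component of $H^c$ if and only if lift of a null-homotopic component of $\pi(H^c)$ --- is false for general open subsets of $\mathbb{T}^2$: an open connected subset of $\mathbb{R}^2$ can be unbounded yet disjoint from all its nonzero integer translates (a neighbourhood of the graph of $y=\sqrt{x}$ whose width shrinks like $1/x$ is such a set), so a trivial stabilizer does not imply boundedness, and this equivalence would itself need proof for the sets at hand. Second, the derivation of the threshold $\sqrt{2}(n^2+1)^2/n$ does not go through: ``finitely many local cell-adjacency types caps the number of cells an enclosing cycle can contain'' is a non sequitur --- finitely many local types place no bound whatsoever on the size of a bounded complementary region. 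The constant has to come from a quantitative pigeonhole/periodicity argument forcing a component of large diameter to meet one of its own $\mathbb{Z}^2$-translates (whence $W+z=W$ and unboundedness), and that computation is exactly what is missing. Third, the upgrade from ``one unbounded component'' to ``every component unbounded'' is argued by density of the rescaled channels $H^c/n^k$; density transverse to $v$ does not give connectivity --- you must check that the scale-$n^{-k}$ channels actually link up across $k$-cell boundaries (i.e.\ that the relevant edge points are not in $F$) and that every point of $H^c$ is joined \emph{inside} $H^c$ to this channel system. Finally, for the structure statement, the claim that the multiscale transverse channels are genuine subsets of $H^c$ rather than regions refilled by other parts of $F$ is precisely the hard point, which you flag in your last sentence without resolving, and the appeal to Lemmas~\ref{A} and \ref{B} leaves their hypotheses (e.g.\ that the relevant components meet $\partial[0,1]^2$) unverified. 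The intuitions are the right ones, but each key step still needs its argument; the intended route in this paper is simply to quote \cite{LLR}.
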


\begin{lemm}\label{position}
If $\gamma$ is an open arc  in the interior of $F_N\setminus F$ for some $N\ge2$ then $n^N\gamma \subset H^c$.
\end{lemm}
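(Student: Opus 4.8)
The plan is to argue pointwise: it suffices to show that for each $z\in\gamma$ the image $w:=n^N z$ lies in $H^c=\bbR^2\setminus(F+\bbZ^2)$, i.e.\ that $w-\ell\notin F$ for every $\ell\in\bbZ^2$. Since $F\subset F_0=[0,1]^2$, any $\ell$ with $w-\ell\notin[0,1]^2$ is automatically harmless, so the only translates that can possibly satisfy $w-\ell\in F$ are those with $w-\ell\in[0,1]^2$, equivalently those $\ell$ for which $z$ lies in the closed $N$-cell $(F_0+\ell)/n^N$. Thus everything reduces to two points: (i) controlling which integer cells can contain $z$, and (ii) showing that $w$ avoids $F$ inside each such cell.

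For (ii) I would unfold the recursion $F_k=(F_{k-1}+\Dc)/n$ exactly $N$ times. Writing $\Dc_N:=\{d_1+n d_2+\cdots+n^{N-1}d_N:\ d_i\in\Dc\}\subset\bbZ^2$ for the index set of the $N$-cells composing $F_N$, one gets $F_N=\bigcup_{m\in\Dc_N}(F_0+m)/n^N$ and, more generally, $F_j=\bigcup_{m\in\Dc_N}(F_{j-N}+m)/n^N$ for every $j\ge N$. Now $z\notin F=\bigcap_k F_k$, so $z\notin F_K$ for some index $K$; since $z\in F_N$ and the $F_k$ decrease, necessarily $K>N$. Feeding $z\notin F_K=\bigcup_{m\in\Dc_N}(F_{K-N}+m)/n^N$ through the unfolded recursion yields $w-m=n^N z-m\notin F_{K-N}$ for every $m\in\Dc_N$, and as $F\subset F_{K-N}$ this gives $w-m\notin F$ for all $m\in\Dc_N$. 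Hence $w-\ell\notin F$ whenever $\ell\in\Dc_N$.

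It remains to settle (i): I claim that every $\ell\in\bbZ^2$ with $w-\ell\in[0,1]^2$ already lies in $\Dc_N$, and this is precisely where the hypothesis $z\in\mathrm{int}(F_N\setminus F)\subset\mathrm{int}(F_N)$ enters. Indeed, $w-\ell\in[0,1]^2$ says $z$ sits in the closed $N$-cell indexed by $\ell$, so every neighbourhood of $z$ meets the open cell $\mathrm{int}\bigl((F_0+\ell)/n^N\bigr)$. If $\ell\notin\Dc_N$, that open cell is disjoint from $F_N$, contradicting the fact that a whole neighbourhood of $z$ lies in $F_N$; hence $\ell\in\Dc_N$. Combining (i) and (ii) gives $w-\ell\notin F$ for all $\ell\in\bbZ^2$, so $w\in H^c$, and since $z\in\gamma$ was arbitrary we conclude $n^N\gamma\subset H^c$.

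I expect the only genuine difficulty to be the boundary bookkeeping in step (i): a point of $\gamma$ may lie on a grid line $\{x_1\in n^{-N}\bbZ\}$ shared by several $N$-cells, so that $w$ lands on $\partial(F_0+\ell)$ for more than one $\ell$ at once, and one must check that mere membership of $z$ in $F_N\setminus F$ is insufficient. It is exactly the \emph{interior} hypothesis that forces all cells abutting $z$ to be genuine $N$-cells of $F_N$, i.e.\ to have indices in $\Dc_N$, and that is what closes the argument.
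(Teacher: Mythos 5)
Your argument is correct and takes essentially the same route as the paper: both expand $F_N$ over the digit set $\Dc_N$, use the interior hypothesis to show that the only integer translates $\ell$ with $n^Nz-\ell\in[0,1]^2$ must lie in $\Dc_N$, and use $\gamma\cap F=\emptyset$ to exclude exactly those translates. Your step (i) merely makes explicit the boundary bookkeeping that the paper compresses into the phrase ``the first relation implies $z\in\Dc_N$,'' and your detour through $z\notin F_K$ in step (ii) is an equivalent substitute for the paper's direct use of $n^NF=F+\Dc_N$.
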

\begin{proof}
Let $\mathcal{D}_0=\mathcal{D}$ and $\mathcal{D}_{k}=\mathcal{D}_0+n\mathcal{D}_{k-1}$ for $k\ge1$. Then, by repeatedly using $nF=F+{\mathcal D}$ (implying $n^NF=F+\mathcal{D}_N$) and $nF_{k}=F_{k-1}+{\mathcal D}$, we arrive at
$$
n^N\gamma \subset \left ([0,1]^2+{\mathcal D}_N\right )^o \text{ \ and \ } n^N\gamma\cap (F+\mathcal{D}_N)=\emptyset.
$$
If $n^N\gamma$ intersects $z+F$ for some $z\in {\mathbb Z}^2$ the first relation implies that $z\in D_N$, contradicting the second relation.
\end{proof}

The following two results are used when we prove Theorem \ref{main-3}.
\begin{lemm}[cf.~{\cite[Part B, Section VI, Torhorst Theorem and Lemma 2]{Whyburn-Duda}}]\label{torhorst}
The boundary of each component of the complement of a locally connected continuum $M$ is itself a locally connected continuum. Moreover, if $M$ has no cut point then every component of $\mathbb{R}^2\setminus M$ is bounded by a simple closed curve.
\end{lemm}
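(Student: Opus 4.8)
The statement is the classical Torhorst theorem together with its standard refinement, so the plan is to route the argument through \emph{property {\bf S}} and then through the metric characterization of the simple closed curve. It is convenient to pass to the sphere $\bbS^2=\bbR^2\cup\{\infty\}$: since $M$ is compact, every bounded component of $\bbR^2\setminus M$ is unchanged while the single unbounded one becomes an ordinary domain with the same boundary, so it suffices to treat a fixed component $U$ of $\bbS^2\setminus M$ and its boundary $B=\partial U$. Because $U$ is open and $M$ is exactly the complement of the union of all such components, every point of $B$ lies in $M$; hence $B\subset M$ is compact. Since a compact set with property {\bf S} is locally connected, the whole of the first assertion reduces to showing that $B$ is connected and that $B$ has property {\bf S}. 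Connectedness of $B$ I would obtain from the fact that a complementary component of a \emph{connected} Peano continuum is simply connected: if $\bbS^2\setminus U=P\cup Q$ were a separation, the connected set $M$ would lie in one piece, forcing the other to be a nonempty clopen subset of $\bbS^2\setminus M$ missing $U$, which one contradicts by separating the two traces of $B$ inside a chart with an arc avoiding $M$, exactly as in the Separation Lemma~\ref{B} and Theorem~\ref{brick-wall}; this makes $B$ a continuum.

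The heart of the matter is showing that $B$ has property {\bf S}. Fix $\epsilon>0$. As $M$ is a Peano continuum it is \emph{uniformly} locally connected, so there is $\delta>0$ such that any two points of $M$ within distance $\delta$ lie in a connected subset of $M$ of diameter $<\epsilon$. Cover $B$ by finitely many open disks $D_1,\dots,D_m$ of diameter $<\delta$. The obstruction is that $B\cap D_j$ need not be connected, so these sets do not yet witness property {\bf S} for $B$. To repair this I would use that every point of $B$ is arcwise accessible from the domain $U$, together with a boundary-bumping argument: within each $D_j$ the large components of $B$ are controlled, while the small pieces are joined, through short connected arcs of $M$ furnished by uniform local connectedness, into finitely many connected sets of diameter $<\epsilon$. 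Carrying this out carefully, that is, controlling how the complementary domain meets each $D_j$ and keeping the joining arcs of small diameter, is precisely the technical core of Torhorst's theorem and is where I expect the main difficulty to lie. The outcome is that $B$ is a finite union of connected sets of diameter $<\epsilon$; since $\epsilon$ was arbitrary, $B$ has property {\bf S} and is therefore a Peano continuum.

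For the refinement, suppose $M$ has no cut point. By the first part each $B=\partial U$ is a Peano continuum, and I would identify it as a simple closed curve via the classical characterization that a nondegenerate Peano continuum is a simple closed curve if and only if it has no cut point while every pair of its distinct points separates it. Two checks are needed. First, $B$ has no cut point: a cut point $p$ of $B$ would be promoted to a cut point of $M$, since for a Peano continuum the absence of cut points is equivalent to cyclic connectedness, so any two points of $M$ lie on a simple closed curve in $M$, and such a curve can be arranged to meet $B$ on both sides of $p$, contradicting that $p$ separates $B$. Second, every pair of distinct points of $B$ separates $B$: this is a Jordan-curve-type separation coming from the fact that $B$ bounds the disk-like domain $U$, so that deleting two boundary points disconnects the frontier. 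Combining the two facts with the characterization shows $B$ is a simple closed curve, as claimed. The delicate point in this part is the transfer of the no-cut-point property from $M$ to $B$ through cyclic element theory; the separation property of $B$ is comparatively routine once $B$ is known to bound $U$.
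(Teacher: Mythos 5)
The paper does not prove this lemma at all: it is quoted, with a ``cf.''\ citation, directly from Whyburn--Duda (Torhorst's theorem together with its standard corollary for continua without cut points), so there is no in-paper argument to compare yours against. Your sketch does identify the classical route --- connectedness of $B=\partial U$ via a separation argument on the sphere, local connectedness via property {\bf S}, and the simple-closed-curve conclusion via the two-point-separation characterization --- but at both places where the actual work of Torhorst's theorem happens you defer rather than argue, so as written this is not a proof.

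Concretely: (i) For property {\bf S} of $B$, you cover $B$ by disks $D_j$ of diameter $<\delta$ and propose to join the pieces of $B\cap D_j$ by short connected subsets of $M$ supplied by uniform local connectedness; but those connecting sets lie in $M$, not in $B=\partial U$, and there is no reason they stay in the boundary of $U$ --- they will in general wander into $M\setminus\partial U$, so they do not witness property {\bf S} \emph{for $B$}. The standard repair goes through accessibility of boundary points from $U$ and crosscuts of $U$ (equivalently, the fact that the complementary domain $U$ itself has property {\bf S}), none of which you set up; you explicitly flag this as ``the technical core'' and leave it open. (ii) In the no-cut-point half, the claim that every pair of distinct points of $B$ separates $B$ is exactly the step that rules out $B$ being a $\theta$-curve or having a branch point, and your justification (``deleting two boundary points disconnects the frontier'') presupposes the Jordan-curve-like structure you are trying to establish; the transfer of the no-cut-point property from $M$ to $B$ via cyclic connectedness is likewise only gestured at. Since the result is classical and the paper's own treatment is a citation, the honest options are either to cite Whyburn--Duda as the authors do, or to supply the accessibility/crosscut argument in full; the present sketch does neither.
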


\begin{lemm}[cf.~{\cite[p.108, (3.1) Separation Theorem]{Whyburn42}}]\label{separation-theorem}
If $A,B\subset\mathbb{R}^2$ are compact sets such that $A\cap B$ is totally disconnected and $a,b$ are points of $A\setminus B$ and $B\setminus A$, respectively, and $\epsilon$ is any positive number, then there esists a simple closed curve $J$ which separates $a$ and $b$ and is such that $(J\cap(A\cup B))\subset (A\cap B)$, and every point of $J$ is at a distance less than $\epsilon$ from some point of $A$.
\end{lemm}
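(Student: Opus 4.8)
The plan is to reduce to the classical separation of two \emph{disjoint} compacta and then repair the resulting Jordan curve across the totally disconnected common part $C:=A\cap B$. I would work in the sphere $S^2=\mathbb{R}^2\cup\{\infty\}$. The constraint $J\cap(A\cup B)\subset C$ forces $J$ to avoid $A\setminus C$ and $B\setminus C$ entirely and to meet $A\cup B$ only inside the totally disconnected set $C$; this is exactly what makes the statement more than the disjoint case. First I would exploit total disconnectedness: for a parameter $\eta>0$ the compact set $C$ admits a finite partition $C=C_1\cup\cdots\cup C_m$ into sets that are clopen in $C$ and of diameter $<\eta$, and each $C_i$ can be enclosed in an open Jordan domain $U_i$ of diameter $<\eta$ with the $\overline{U_i}$ pairwise disjoint and $\partial U_i\cap C=\emptyset$. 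Taking $\eta$ small I may also require $a,b\notin\bigcup_i\overline{U_i}$ and that every $\overline{U_i}$ lie within $\epsilon/2$ of $A$ (legitimate since $C_i\subset C\subset A$). Setting $A'=A\setminus\bigcup_iU_i$ and $B'=B\setminus\bigcup_iU_i$ then produces two \emph{disjoint} compacta (a common point would lie in $C\setminus\bigcup_iU_i=\emptyset$), with $a\in A'$ and $b\in B'$.

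Next I would separate the disjoint compacta. Since $A'\cap B'=\emptyset$, the component $P$ of $a$ in $A'\cup B'$ lies in $A'$ and the component $Q$ of $b$ lies in $B'$, and $P\neq Q$. Using the standard fact that distinct components of a planar compactum (which by Theorem \ref{O} coincide with its quasi-components) can be separated by a Jordan curve lying in any prescribed neighborhood of one of them --- concretely, by taking the boundary cycle enclosing $P$ of the union of the cells of a sufficiently fine square grid that meet $P$ --- I obtain a Jordan curve $J'$ with $J'\cap(A'\cup B')=\emptyset$, separating $a$ from $b$, and lying within $\epsilon/2$ of $A$. This $J'$ already meets every requirement except one: it may dip into the excised disks $U_i$ and there run into $A\cup B$.

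The heart of the argument is the local surgery inside each $U_i$. I would first arrange $J'$ to be polygonal and to cross each $\partial U_i$ in finitely many points, none of them in $A\cup B$ (possible because $\partial U_i\cap C=\emptyset$ and $\partial U_i\cap A\subset A'$, which $J'$ avoids); thus $J'\cap\overline{U_i}$ is a finite disjoint union of crossing arcs with endpoints on $\partial U_i$. For each such arc I want to replace it, inside $\overline{U_i}$ and with the same endpoints, by an arc meeting $A\cup B$ only within $C_i$. Inside $\overline{U_i}$ the two compacta $A\cap\overline{U_i}$ and $B\cap\overline{U_i}$ meet exactly in the totally disconnected set $C_i$; wherever a connected piece of $A\cap\overline{U_i}$ and a connected piece of $B\cap\overline{U_i}$ joined through $C_i$ would block the path, I would split the local obstruction into an ``$A$-side'' and a ``$B$-side'' meeting only in a totally disconnected set, place them on opposite sides of a tiny coordinate square, and invoke Theorem \ref{brick-wall} to push the path across through a single point of $C_i$. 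Total disconnectedness of $C_i$ is precisely what guarantees such crossing points exist arbitrarily nearby, so every reroute stays inside $U_i$ (hence within $\epsilon/2$ of $A$) and touches $A\cup B$ only in $C$.

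Performing these modifications in the pairwise disjoint disks $U_i$ leaves $J'$ unchanged outside $\bigcup_i\overline{U_i}$, so the spliced result is again a single simple closed curve $J$; since each reroute replaces an arc by an arc with the same endpoints inside a disk missing $a$ and $b$, the new curve still separates $a$ from $b$. By construction $J\cap(A\cup B)\subset C=A\cap B$ and every point of $J$ lies within $\epsilon$ of $A$, as required. I expect the main obstacle to be exactly the surgery of the third paragraph: one must (i) organize the excision and the choice of $J'$ so that reconnecting the rerouted arcs yields a \emph{single} Jordan curve rather than several disjoint loops, and (ii) verify that each necessary passage can genuinely be threaded through the totally disconnected $C_i$ without ever touching $A\setminus C$ or $B\setminus C$ --- the step where Theorem \ref{brick-wall} and the total disconnectedness hypothesis do the real work.
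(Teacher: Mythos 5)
The paper itself offers no proof of Lemma \ref{separation-theorem}: it is imported verbatim (``cf.'') from Whyburn's \emph{Analytic Topology}, (3.1) on p.~108, so there is no in-paper argument to measure you against. Judged on its own, your outer scaffolding is reasonable and its first two stages are essentially sound: enclosing the clopen pieces $C_i$ of $C=A\cap B$ in small pairwise disjoint neighborhoods $U_i$ with $\partial U_i\cap C=\emptyset$, passing to the disjoint compacta $A'=A\setminus\bigcup_iU_i$ and $B'=B\setminus\bigcup_iU_i$, and producing a grid-boundary Jordan curve $J'$ missing $A'\cup B'$, separating $a$ from $b$, and lying near $A$ (here one should really take the grid cells meeting a small clopen neighborhood of the component of $a$ in $A'\cup B'$, via Theorem \ref{O}, rather than the cells meeting the component itself, but that is routine). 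The homological argument that surgery inside disks missing $a$ and $b$ preserves the separation is also fine.

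The genuine gap is the third paragraph, which is where the entire content of the theorem lives and which you do not actually prove. You need, for each crossing arc of $J'$ in $\overline{U_i}$ with endpoints $p,q\in\partial U_i$, a replacement arc in $\overline{U_i}$ with the same endpoints meeting $A\cup B$ only in $C_i$; such an arc need not exist for arbitrary $p,q$. For instance, if $A\cap\overline{U_i}$ contains a continuum spanning $U_i$ from boundary to boundary and disjoint from $C_i$ (which can happen, since $C_i$ is only required to be a small clopen piece of $A\cap B$, not of $A$), then no arc joining the two sides of that continuum inside $\overline{U_i}$ can meet $A\cup B$ only in $C_i$. So one must prove that the particular crossings forced on $J'$ by your construction are always repairable --- a statement of essentially the same depth as Lemma \ref{separation-theorem} itself, and one that your excision step may actually sabotage, since cutting $A$ along $\bigcup_iU_i$ can disconnect a continuum of $A$ and cause $J'$ to ``cap off'' one piece by crossing $A\setminus C$ inside some $U_i$. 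The appeal to Theorem \ref{brick-wall} cannot close this gap: that lemma takes two \emph{disjoint} closed sets pushed to opposite sides of a square and produces a path avoiding \emph{both}; it says nothing about threading a path through a prescribed totally disconnected set, and the phrase ``push the path across through a single point of $C_i$'' is exactly the assertion that needs proof (nothing guarantees that any single point of $C_i$ is accessible from both sides of the obstruction). A secondary, also unaddressed, issue is that the several reroutes inside one $U_i$ must be made pairwise disjoint for the spliced curve to remain simple.
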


\begin{rem}\label{separation-case}
We will apply the above lemma to a particular case, when $A$ is a component of a compact set $K\subset\mathbb{R}^2$ and $B\subset K$ compact with $A\cap B=\emptyset$. In such a case, by Theorem \ref{O} we can find for any $\epsilon>0$ a separation $K=E\cup F$ with $A\subset E\subset A_\epsilon$ and $B\subset F$. Here $A_\epsilon$ is the $\epsilon$-neighborhood of $A$ and consists of all the points at a distance less than $\epsilon$ to $A$. Then, by Lemma \ref{separation-theorem} there is a simple closed curve $\Gamma$ with $\Gamma\cap (E\cup F)=\emptyset$ such that $A$ lies in one component of $\mathbb{R}^2\setminus \Gamma$ and $B$ in the other and that every point of $\Gamma$ is at a distance less than $2\epsilon$ from some point of $A$.
\end{rem}

\begin{proof}[{\bf Proof for Theorem \ref{main-3}.}]

Assume on the contrary that $F$ has a component $X$ which is not locally connected at a point $x_0\in X$.
We will show that $H^c$ would have a component of diameter $>\sqrt{2}(n^2+1)^2/n$. By  Lemma \ref{basic-fact} this implies that every component of $F$ is either a line segment or a single point, thus provides a contradiction and ends our proof.

By the above assumption, we can find an open set $V_0\ni x_0$ such that the component of $V_0\cap X$ containing $x_0$, which we denote by $P'$,  does not contain any neighborhood of $x_0$,  under the induced topology of $X$.

Let $N_0$ be the smallest integer  such that all the $N_0$-cells containing $x_0$ is a subset of $V_0$.
There are at most $4$ such cells, and we denote their union by  $V$; then $V$ is a rectangle lying inside $V_0$,  and  $x_0$ belongs to the interior of $V$.
Let $\displaystyle \varepsilon_0=\frac{1}{5}{\rm dist}(x_0,\partial V). $
We consider the components of the compact set $F\cap V$ and let $P_0$ be the component of $F\cap V$ containing $x_0$. It is standard to check that all components of $X\cap V$ are each a component of $F\cap V$. As $P_0\subset P'$, the component $P_0$ is not a neighborhood of $x_0$ under the induced topology of $X$.

The rest of our proof is divided into four lemmas. The basic idea is to find an integer $N$ satisfying $n^N\varepsilon_0>\sqrt{2}(n^2+1)^2/n$ and an open arc $\gamma\subset (F_N\setminus F)$ of diameter greater than $2\varepsilon_0$. See Lemma \ref{good-gamma}. Then, by Lemma \ref{position} we have an arc $n^N\gamma\subset H^c$ and a component of $H^c$, the one containing $n^N\gamma$, both of which are of diameter greater than $\sqrt{2}(n^2+1)^2/n$. By Lemma  \ref{basic-fact} this completes the whole proof.

\begin{lemm}\label{sequence-of-components}
We can find a sequence $\{P_k: k\ge1\}$ of components of $X\cap V$ and points $x_k, y_k\in P_k$ with $y_k\in\partial V$ and $|x_k-y_k|>4\varepsilon_0$ such that $\lim\limits_{k\rightarrow\infty}x_k=x_0$, $\lim\limits_{k\rightarrow\infty}y_k=y_0$, and $\lim\limits_{k\rightarrow\infty}P_k=P_\infty$ under Hausdorff distance. Here  $\displaystyle \varepsilon_0=\frac{1}{5}{\rm dist}(x_0,\partial V). $
\end{lemm}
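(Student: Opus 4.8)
The plan is to read the required sequence directly off the failure of local connectedness at $x_0$, the one new ingredient being that the connectedness of the component $X$, together with Theorem \ref{O}, forces the relevant components of $X\cap V$ to run all the way out to $\partial V$; the three limit statements are then routine compactness. Write $V^{\circ}$ for the interior of $V$.

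First I would record two consequences of the hypothesis that $P_0$ is not a neighbourhood of $x_0$ in $X$. On one hand, $X$ cannot be contained in $V^{\circ}$: otherwise $X\cap V=X$ is connected, so $P_0=X$, which is trivially a neighbourhood of $x_0$ in $X$. On the other hand, every neighbourhood of $x_0$ in $X$ meets $X\setminus P_0$, so I may choose points $z_k\in X\setminus P_0$ with $z_k\to x_0$; discarding finitely many terms (recall $x_0\in V^{\circ}$) we may assume $z_k\in X\cap V$ for all $k$. Let $Q_k$ be the component of $X\cap V$ containing $z_k$; since $z_k\notin P_0$ we have $Q_k\ne P_0$.

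The heart of the matter is to show each $Q_k$ meets $\partial V$. Suppose instead that $Q_k\cap\partial V=\emptyset$ for some $k$. By Theorem \ref{O}, $Q_k$ is a quasi-component of the compact set $X\cap V$, and it is disjoint from the compact set $X\cap\partial V$; by covering $X\cap\partial V$ with closed-open subsets of $X\cap V$ that miss $z_k$, passing to a finite subcover, and intersecting the complementary closed-open sets, I obtain a set $A$ that is closed-open in $X\cap V$, contains $Q_k$, and is disjoint from $\partial V$. Then $A\subset V^{\circ}$, so its openness in $X\cap V$ improves to openness in $X$, while $A$ stays closed in $X$; hence $A$ is a nonempty closed-open subset of the connected set $X$, giving $A=X$ and so $X\subset V^{\circ}$, contrary to the previous paragraph. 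Therefore $Q_k\cap\partial V\ne\emptyset$, and I set $x_k=z_k$ and pick $y_k\in Q_k\cap\partial V$. Because $y_k\in\partial V$ while ${\rm dist}(x_0,\partial V)=5\varepsilon_0$, we get $|x_k-y_k|\ge 5\varepsilon_0-|x_k-x_0|>4\varepsilon_0$ for all large $k$.

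It remains to extract convergence. Passing to a subsequence, the points $y_k\in\partial V$ converge to some $y_0\in\partial V$, and by the Blaschke selection theorem the compact connected sets $Q_k$ converge in Hausdorff distance to a compact connected set $P_\infty$, which contains $x_0$ and $y_0$ since $x_k\to x_0$ and $y_k\to y_0$. Setting $P_k=Q_k$ yields the asserted sequence. I expect the only genuine obstacle to be the middle step: converting the statement ``$x_0$ fails to be interior to $P_0$'' into components that physically reach $\partial V$. The quasi-component separation granted by Theorem \ref{O}, promoted to a clopen splitting of $X$ via the connectedness of the ambient component, is precisely the device that overcomes it; everything else is bookkeeping with neighbourhoods and compactness.
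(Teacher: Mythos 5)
Your proof is correct and takes essentially the same route as the paper's: pick points of $(X\cap V)\setminus P_0$ accumulating at $x_0$, force their components in $X\cap V$ to meet $\partial V$ (the paper cites its Pasting Lemma \ref{A} here, whose proof is exactly the quasi-component/clopen separation you re-derive inline from Theorem \ref{O}), and then conclude by Blaschke selection. The only substantive difference is that the paper arranges the components $P_k$ to be pairwise distinct (by choosing $x_{k+1}$ within ${\rm dist}(x_0,P_1\cup\cdots\cup P_k)$ of $x_0$), which the statement does not literally demand but the subsequent lemmas rely on; in your construction distinctness is easily recovered, since each component of $X\cap V$ other than $P_0$ is closed and misses $x_0$, hence contains only finitely many of the $z_k$.
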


Clearly, we can find a point $x_1$ in $(X\cap V)\setminus P_0$  with $|x_1-x_0|<\varepsilon_0$. Let $P_1$ be the component of $X\cap V$ containing $x_1$. Since $P_1$ is closed and $\text{dist}(x_0,P_1)>0$, we can find a second point $x_2$ in $(X\cap V)\setminus(P_0\cup P_1)$  such that $|x_2-x_0|<\varepsilon_0/2$. Denote by $P_2$ the
component of $X\cap V$ containing $x_2$. Continue this procedure,
we can find an infinite sequence of distinct components $\{P_k: k\ge1\}$ of $X\cap V$ and points $x_k\in P_k$ such that $|x_k-x_0|<\varepsilon_0/k$.
Here we note that every $P_k$ intersects the boundary $\partial V$ of $V$. Actually, if on the contrary $P_k\cap \partial V=\emptyset$ for some $k$ then $P_k$ would be a component of $X$ according to Lemma~\ref{A}, violating the connectedness of $X$. Therefore, for every $k\ge1$, we can choose a point $y_k$ in $P_k\cap \partial V$. Here, we have $|x_k-y_k|>4\varepsilon_0$ for all but finitely many $k\ge1$.
By choosing an appropriate subsequence, we shall obtain the result of Lemma \ref{sequence-of-components}. See the following Figure \ref{V} for a simplified depiction.
\begin{figure}[ht]
\centering
\vskip -0.1cm
 \includegraphics[height=5cm]{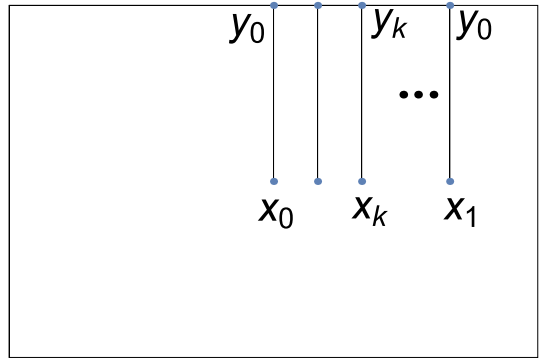}\\
\vskip -0.5cm
\caption{Relative locations of $x_k$ and $y_k$ in the rectangle $V$.}
\label{V}
\end{figure}

\begin{lemm}\label{integer-N}
Pick a large enough integer $N>2N_0$ such that
$n^N\varepsilon_0>\sqrt{2}(n^2+1)^2/n$. Then each $N-$cell $\left[\frac{i}{n^N},\frac{i+1}{n^N}\right]\times
\left[\frac{j}{n^N},\frac{j+1}{n^N}\right](1\le i,j\le n^N)$ is of diameter
$\displaystyle \sqrt{2}n^{-N}<\frac{n\varepsilon_0}{(n^2+1)^2}<
\frac{\varepsilon_0}{5}$.
\end{lemm}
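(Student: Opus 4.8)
The plan is to treat this statement as a bookkeeping lemma: everything reduces to the existence of a suitable exponent $N$ together with the verification of an explicit chain of inequalities, so I would carry it out by elementary arithmetic and the geometry of a square, with no auxiliary constructions needed.

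First I would justify that the required $N$ can be chosen. Since $n\geq 2$, the quantity $n^N\varepsilon_0$ tends to $+\infty$ as $N\to\infty$; hence there is an integer $N$ that simultaneously satisfies $N>2N_0$ and $n^N\varepsilon_0>\sqrt{2}(n^2+1)^2/n$. I would simply fix one such $N$. Next I would record the diameter of an $N$-cell: each $N$-cell $\left[\frac{i}{n^N},\frac{i+1}{n^N}\right]\times\left[\frac{j}{n^N},\frac{j+1}{n^N}\right]$ is a closed square of side $n^{-N}$, so its diameter equals the length of its diagonal, namely $\sqrt{2}\,n^{-N}$.

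It then remains to verify the two inequalities. For the first, $\sqrt{2}\,n^{-N}<\frac{n\varepsilon_0}{(n^2+1)^2}$, I would observe that it is nothing but the defining condition $n^N\varepsilon_0>\sqrt{2}(n^2+1)^2/n$ rewritten: multiplying that inequality through by the positive factor $\frac{n^{1-N}}{(n^2+1)^2}$ turns its left side $n^N\varepsilon_0$ into $\frac{n\varepsilon_0}{(n^2+1)^2}$ and its right side $\sqrt{2}(n^2+1)^2/n$ into $\sqrt{2}\,n^{-N}$, yielding exactly the claimed bound. For the second inequality $\frac{n\varepsilon_0}{(n^2+1)^2}<\frac{\varepsilon_0}{5}$, I would cancel the positive factor $\varepsilon_0$ and reduce to $5n<(n^2+1)^2=n^4+2n^2+1$; since $n\geq 2$ gives $n(n^3-5)>0$ and hence $n^4>5n$, this holds with room to spare.

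I expect no genuine obstacle here: the only point requiring any thought is the last elementary inequality $5n<(n^2+1)^2$, and even that is immediate for every integer $n\geq 2$. The content of the lemma is organizational rather than substantive — it fixes the scale $N$ and confirms that the $N$-cells are small relative to $\varepsilon_0$ — so that the subsequent construction of the arc $\gamma\subset(F_N\setminus F)$ in Lemma \ref{good-gamma}, and thence the blow-up $n^N\gamma\subset H^c$ of large diameter, can be carried out.
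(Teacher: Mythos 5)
Your proof is correct and matches the paper, which simply declares this lemma ``immediate'': the diameter of an $N$-cell is $\sqrt{2}\,n^{-N}$, the first inequality is a rearrangement of the defining condition on $N$, and the second reduces to $5n<(n^2+1)^2$ for $n\geq 2$. Nothing further is needed.
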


The above lemma is immediate. And, for the sake of convenience, an $N-$cell is called a \textbf{black $N-$cell} if it has the form $\displaystyle   f_\alpha([0,1]^2)$ for some $\alpha\in \{1,\dots,q\}^N$
and  intersects infinitely many $P_k$; otherwise, it is called a \textbf{white $N-$cell}.
Let $M$ be the union of the black cells. Then all but finitely many $P_k$, and also $P_\infty$, is contained in $M$, which in turn is contained in $F_N$.  Clearly, $M^o$ has finitely many components $W_1,\ldots,W_p$; moreover, the closures of any two components intersect at a finite set. Since for each $W_i$ the closure $\cl(W_i)$ intersects infinitely many $P_k$, we have $P_\infty\cap\cl(W_i)\ne\emptyset$ for $1\le i\le p$. Moreover, there are at most finitely many $P_k$ intersects the finite set $\cl(W_i)\cap\left(\bigcup_{j\ne i}\cl(W_j)\right)$. Since the components $P_k$ are disjoint continua, we can infer that every $\cl(W_i)$ contains all but finitely many of the components $P_k$ with $P_k\cap\cl(W_i)\ne\emptyset$. Therefore, we have $P_\infty\subset\cl(W_i)$ and $p=1$; in other words, the interior $M^o$ of $M$ is connected.
Consequently, $M$ has no cut point. By Lemma \ref{torhorst}, the unbounded component $W$ of $\bbR^2\setminus M$ is bounded by a simple closed curve $J$.

\begin{lemm}\label{M-star}
There is a continuum $M^*\subset M$ such that:
(1) the unbounded component of $\mathbb{R}^2\setminus M^*$ is bounded by a simple closed curve $J^*$, (2) there is an integer $k_0>1$ such that $M^*$ contains $\bigcup_{k\ge k_0}P_k$ and thus contains $P_\infty$, and (3) every bounded component $U$ of $Int(J^*)\setminus M^*$ satisfies $\cl(U)\cap\left(\bigcup_{k\ge k_0}P_k\right)=\emptyset$.
\end{lemm}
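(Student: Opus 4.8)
Since $M$ has no cut point, Lemma~\ref{torhorst} applies to it: the unbounded complementary component $W$ is bounded by the Jordan curve $J$, and each bounded complementary component (hole) $U_1,\dots,U_m$ of $M$ is likewise a Jordan domain. Observe first that if no hole met infinitely many of the $P_k$, the lemma would be immediate: we could take $M^*=M$, $J^*=J$, and choose $k_0$ larger than every index $k$ for which $P_k\cap\bigcup_i\cl(U_i)\ne\emptyset$, so that conditions (1) and (2) are trivial and (3) holds by the choice of $k_0$. Thus the entire difficulty is concentrated in the holes whose closures meet infinitely many $P_k$, which I will call \emph{bad holes}. I would record at the outset the basic tension they create: a bad hole cannot be filled, because $M^*\subset M$; and it cannot be cut out either, because its bounding ring consists of black cells, each meeting infinitely many $P_k$, so deleting them would sever the tail one is obliged to keep.

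The plan is therefore to push the bad holes to the exterior by enclosing a tail of the sequence, together with $P_\infty$, inside a single Jordan curve $J^*$ that hugs $P_\infty$ and misses the $P_k$. Concretely, I would apply the Separation Theorem in the form of Remark~\ref{separation-case} to the compact set $K=P_\infty\cup\overline{\bigcup_{k\ge k_1}P_k}$ for a large $k_1$: since the $P_k$ are pairwise disjoint continua disjoint from $P_\infty$, the set $P_\infty$ is a component of $K$, and one obtains a simple closed curve $\Gamma$ separating $P_\infty$ from the finitely many $P_k$ that stray farthest from it, with every point of $\Gamma$ within $2\epsilon$ of $P_\infty$ and $\Gamma$ disjoint from $K$. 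Because $P_k\to P_\infty$ in the Hausdorff metric, for $\epsilon$ small and $k_0$ large every $P_k$ with $k\ge k_0$ lies inside $\Gamma$, so $\Gamma$ encloses a whole tail. I would then set $J^*=\Gamma$ and $M^*=M\cap\overline{\mathrm{Int}(\Gamma)}$, check that $M^*$ is a continuum containing $\bigcup_{k\ge k_0}P_k$ and hence $P_\infty$, and invoke Lemma~\ref{torhorst} once more to see that the unbounded complement of $M^*$ is bounded by a simple closed curve, giving (1) and (2).

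The step I expect to be the main obstacle is condition (3): every bounded hole of $M^*$ must avoid the tail. The only available mechanism is to arrange that $\Gamma$ hugs $P_\infty$ so tightly that any hole surviving inside it abuts $P_\infty$ alone, never an individual $P_k$ of the tail. Making this precise requires understanding how $P_\infty$ sits against the bad holes, and in particular ruling out or controlling the case in which $P_\infty$ winds around a hole, since such winding would trap the hole inside $\Gamma$ against the accumulating $P_k$. This is where I would concentrate the effort, exploiting the rigid geometry established in Lemma~\ref{sequence-of-components}: the $P_k$ are disjoint continua, each running a definite distance (more than $4\varepsilon_0$) from near $x_0$ out to a point $y_k\in\partial V$, so they form a comb-like family that, together with the disjointness forcing any fixed Jordan curve $\partial U_i$ to be met at distinct accumulating points, cannot surround a hole against the tail on all sides. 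Converting this qualitative picture into a proof that the enclosed holes meet only $P_\infty$ is the crux on which the whole lemma turns.
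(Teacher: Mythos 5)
Your reduction to the ``bad holes'' is exactly right, and so is the tension you articulate at the outset: a hole whose closure meets infinitely many $P_k$ can neither be filled (since $M^*\subset M$) nor excised by deleting the black cells around it. But the mechanism you then propose does not resolve that tension, and you concede yourself that condition (3) --- the crux --- is left unproved. The gap is not merely technical. If $\cl(U)$ meets $P_k$ for infinitely many $k$, then by compactness and the Hausdorff convergence $P_k\to P_\infty$ these intersection points accumulate on $P_\infty$, so $\cl(U)\cap P_\infty\ne\emptyset$; hence $U$ intersects $\mathrm{Int}(\Gamma)$ no matter how tightly $\Gamma$ hugs $P_\infty$, because $U$ has points arbitrarily close to the set $\Gamma$ is obliged to enclose. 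Unless $U$ happens to poke outside $\Gamma$ (for which there is no reason --- $U$ may be a single white cell wedged among black cells carrying $P_\infty$ and the tail), the set $U$ survives inside $M^*=M\cap\cl(\mathrm{Int}(\Gamma))$ as (part of) a bounded complementary component whose closure still meets $P_k$ for $k\ge k_0$, and (3) fails. In other words, ``hugging $P_\infty$'' only disposes of holes that stay away from $P_\infty$, and those were never the problem. Condition (1) is also not secured: $M\cap\cl(\mathrm{Int}(\Gamma))$ is the intersection of a finite union of squares with an arbitrary closed Jordan domain and need not be connected, locally connected, or free of cut points, so Lemma~\ref{torhorst} does not apply to it as written.

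The paper resolves the tension in the opposite way: it does not shrink $M$ to a neighbourhood of $P_\infty$ but keeps $M$ essentially intact and, for each hole whose closure meets some $P_k$, carves a corridor from the outer boundary curve $J$ to the boundary of that hole. The corridor is a thickened arc $D_1$ contained in $M\setminus F$, extracted from a separating curve $\Gamma_1$ with $\Gamma_1\cap M\cap F=\emptyset$ supplied by Lemma~\ref{separation-theorem} and Remark~\ref{separation-case}; removing it damages no $P_k$ beyond finitely many, opens the hole to the unbounded complementary component so that it ceases to be a hole, and strictly decreases the number of holes, so the process terminates after finitely many steps. This idea --- evacuating a bad hole to the exterior through the complement of $F$ inside $M$, rather than trying to exclude it by a curve around $P_\infty$ --- is the ingredient your argument is missing, and without it I do not see how the $\Gamma$-based construction can be repaired.
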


Let $U_1,\ldots, U_p$ be the bounded components of $\bbR^2\setminus M$, called `holes' of $M$. Then
$J\cup Int(J)$ $=M\cup U_1\cup\cdots\cup U_p$.
Suppose that the closure $\cl(U_i)$ of a hole $U_i$ intersects some $P_k$. Pick a separation $M\cap F=A\cup B$ with $P_k\subset A$ and $P_\infty\subset B$. By Lemma \ref{separation-theorem} and Remark \ref{separation-case}, we may choose a simple closed curve $\Gamma_1$ satisfying:
(1) $\Gamma_1\cap M\cap F=\emptyset$;
(2) every point of $\Gamma_1$ is at a distance to $P_k$ less than
\[\displaystyle \frac{1}{2}\min\left\{|x-y|: x\in P_k, y\in P_0\cup\left(\bigcup_nP_n\right)\right\};
\]
(3) $P_k$ and $P_\infty$ are contained in distinct components of $\mathbb{R}^2\setminus\Gamma_1$.
Fix an arc $\alpha_1\subset\Gamma_1$ starting from a point on $J$ and ending at a point on $\bigcup_j\partial U_j$, such that the interior of $\alpha_1$ is disjoint from $J\cup\left(\bigcup_k\cl(U_j)\right)$. Assume that the ending point of $\alpha_1$ is on $\partial U_j$. Here one may expect that $j=i$. However, it is possible that $j\ne i$, if $\Gamma\cap\partial U_j\ne\emptyset$. Renaming the indices $1\le i\le p$, if necessary, we may assume that $j=1$.
Thicken $\alpha_1$ to be a closed topological disk  $D_1$ with $\alpha_1\subset D_1\subset (M\setminus F)$ such that $\partial D_1\setminus(\partial M)$ consists of two open arcs, denoted as $\beta_1,\beta_2$. Let $M_1$ consist of all those points $z$ in $\cl(M\setminus D_1)$ such that no finite set $C$ separates $z$ from $x_0$, in the sense that there is a separation $\cl(M\setminus D_1)\setminus C=A\cup B$ with $z\in A$ and $x_0\in B$. If $J\cap(\partial U_j)\ne\emptyset$ for some $j$, say $j=2$, then it is possible that $\cl(M\setminus D_1)$ has a finite cutting set. In such a case, the interior $(M\setminus D_1)^o$ is not connected.
See the following Figure \ref{M1} for relative locations of $U_1, \alpha_1, \beta_1$ and $\beta_2$ in $M$.

\begin{figure}[ht]
\centering
\vskip -0.05cm
 \begin{tabular}{ccc}\includegraphics[height=5cm]{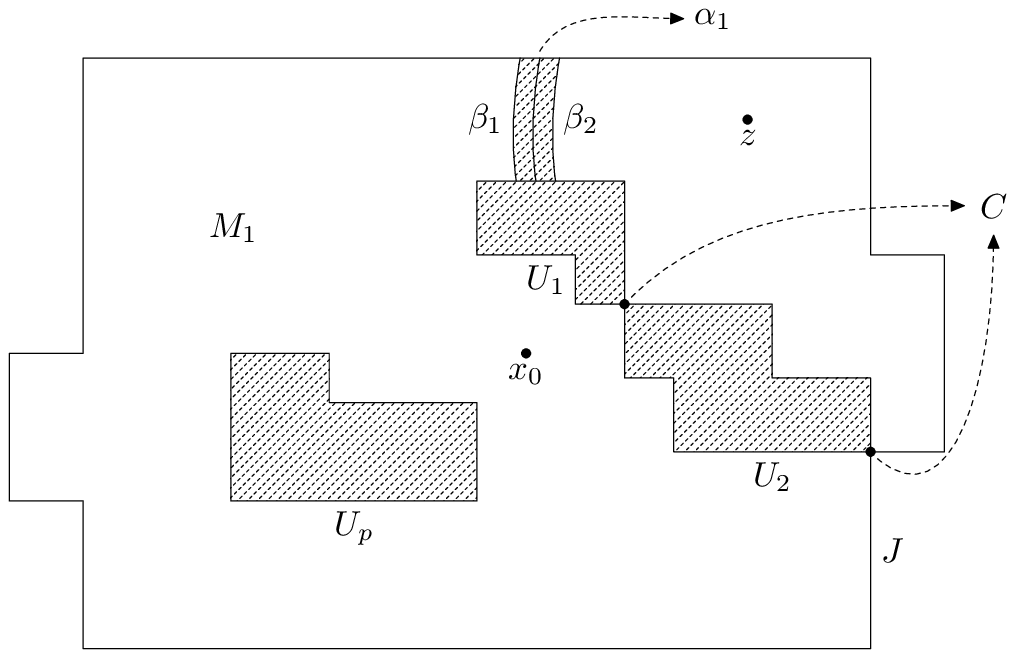} &\hspace{0.25cm} & \includegraphics[height=5cm]{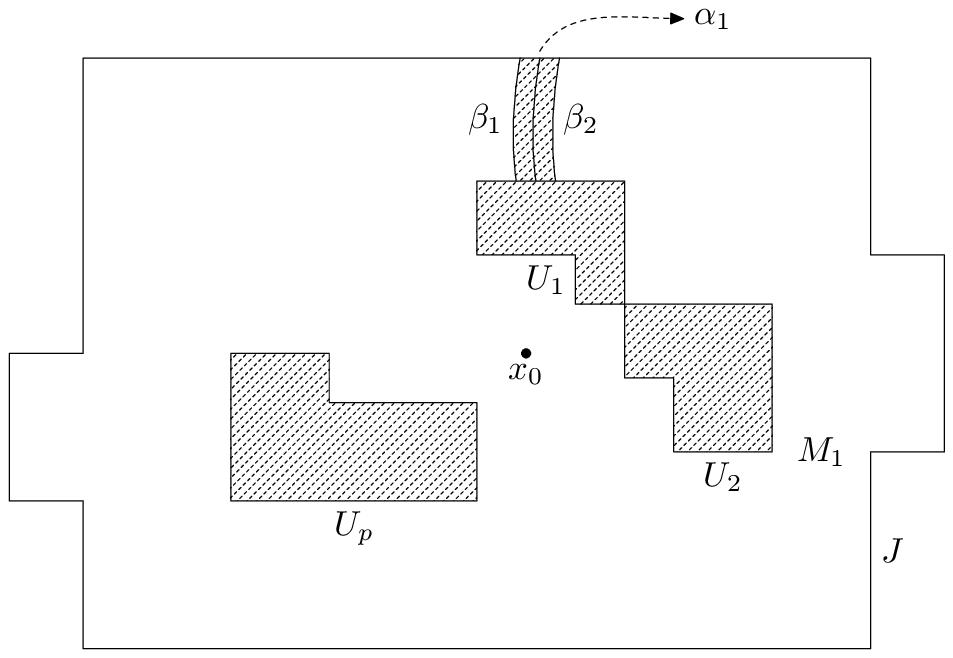} \end{tabular}
\vskip -0.25cm
\caption{ Left: $(M\setminus D_1)^o$ is disconnected. Right: $(M\setminus D_1)^o$ is connected.}
\label{M1}
\vskip -0.05cm
\end{figure}

Now it is direct to check  (1) that each of $\beta_1$ and $\beta_2$ is located on exactly one component of the interior $M_1^o$ and (2) that all but finitely many $P_k$ are contained in $M_1$. From the latter we can infer $P_\infty\subset M_1$.  Therefore, given the common part of the closures for any two components of $M_1^o$, denoted $Z$, we have $Z\cap\left(\beta_1\cup\beta_2\right)=\emptyset$. Since $M$ is a finite union of black cells in $F_N$, we see that $Z$ is necessarily a finite set. Form this we can infer the connectedness of $M_1^o$. Moreover, it is routine to check that $\mathbb{R}^2\setminus M_1$ has no more than $p-1$ bounded components, each of which equals $U_i$ for some $i\ne 1$. Note that the unbounded component $W_1$ of $\bbR^2\setminus M_1$ is bounded by a simple closed curve $J_1$. Clearly, the union $\beta_1\cup\beta_2$ is contained in $J_1$ and does not intersect $F$. If $\partial U_i$ intersects some $P_k$ then we can find a closed arc $\alpha_2$ disjoint from $\beta_1\cup\beta_2\cup(M_1\cap F)$ that starts from a point on $J_1$ and ending at a point on $\partial U_i$. So we can use the above procedure and obtain $M_2$, satisfying the conditions given in Lemma \ref{M-star}. Repeating this procedure for at most finitely many steps, we will obtain $M^*$ as required.

\begin{lemm}\label{good-gamma}
There is an arc $\gamma$ in $(M^*\setminus F)\subset(F_N\setminus F)$ that is of diameter $\ge n^{-N}\!\sqrt{2}(n^2+1)^2/n$.
\end{lemm}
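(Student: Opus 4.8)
The plan is to build $\gamma$ by separating the single component $P_m$ (for one large index $m\ge k_0$) from the limit continuum $P_\infty$ inside the topological disk bounded by $J^*$. First note that $P_\infty$ is connected and contains $x_0=\lim x_k$, so $P_\infty$ lies in $P_0$, the component of $X\cap V$ through $x_0$; since $P_0$ is disjoint from the distinct component $P_m$, the sets $P_m$ and $P_\infty$ are disjoint. Both lie in $K:=F\cap M^*$, and, as $P_m\subset M^*\subset V$ is a component of $F\cap V$, it is a component of $K$. I would then apply Lemma~\ref{separation-theorem} via Remark~\ref{separation-case} with $A=P_m$, $B=P_\infty$, choosing $\epsilon$ smaller than half the distance from $P_m$ to the finitely many hole closures $\cl(U_i)$ (positive by property~(3) of $M^*$), to obtain a simple closed curve $\Gamma$ that avoids $K$, stays within $2\epsilon$ of $P_m$ (hence off every hole), and puts $P_m$ and $P_\infty$ in different complementary components. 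By the choice of $N$ in Lemma~\ref{integer-N}, the target diameter $n^{-N}\sqrt2(n^2+1)^2/n$ is strictly below $\varepsilon_0$, leaving a comfortable margin.

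Next I pass to the closed disk $\hat M:=\cl(Int(J^*))$, gotten by filling the holes of $M^*$. Since $\Gamma$ misses the holes, each point of $\Gamma\cap\hat M$ lies in $M^*$, and since $\Gamma$ also misses $F\cap M^*$ we get $\Gamma\cap\hat M\subset M^*\setminus F$. As $\Gamma$ separates $P_m$ from $P_\infty$ in the plane, these two sets fall in different components of $\hat M\setminus\Gamma$, so the crosscuts of $\hat M$ cut out by $\Gamma\cap Int(J^*)$ separate the region of $P_m$ from the region of $P_\infty$. The complementary regions form a tree under adjacency; I let $\gamma$ be the crosscut incident to the $P_m$–region along the path toward the $P_\infty$–region. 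Then $\gamma$ is a single arc, lies in $M^*\setminus F\subset F_N\setminus F$, separates $P_m$ from $P_\infty$ inside $\hat M$, and has both endpoints on $J^*$.

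The remaining point is the lower bound on $\diam(\gamma)$, and this is where $x_0,y_0$ enter. The points $x_m,y_m\in P_m$ and $x_0,y_0\in P_\infty$ lie on opposite sides of $\gamma$ in $\hat M$. For the $y$–end, recall $y_m,y_0\in\partial V\cap M^*\subset J^*$ with $|y_m-y_0|\to0$; since $J^*$ is a simple closed curve (property~(1)), $y_m$ and $y_0$ sit on the two boundary arcs cut off by the endpoints of $\gamma$, so as $m\to\infty$ one endpoint of $\gamma$ is squeezed to within $o(1)$ of $y_0$. For the $x$–end, either $x_0\in Int(J^*)$, so a small disk about $x_0$ lies in $\hat M$ and the segment $[x_m,x_0]$ (of length $<\varepsilon_0/m$) must cross the separator $\gamma$; or $x_0\in J^*$, in which case the approach $x_m\to x_0$ from the $P_m$–side forces $x_0$ to be an endpoint of $\gamma$. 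Either way $\cl(\gamma)$ meets the $(\varepsilon_0/m)$–ball about $x_0$, whence $\diam(\gamma)\ge|x_0-y_0|-o(1)\ge4\varepsilon_0-o(1)$, which beats $n^{-N}\sqrt2(n^2+1)^2/n$ for all large $m$.

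The main obstacle is the planar-topology bookkeeping of the last two paragraphs: isolating one crosscut of $\hat M$ that performs the separation, and verifying that this crosscut—not some other one—carries the diameter. The three properties of $M^*$ from Lemma~\ref{M-star} are exactly what make this run: property~(3) keeps $\Gamma$ off the holes so the crosscut stays in $M^*\setminus F$, property~(1) supplies the simple closed curve $J^*$ needed for the pinching argument at $y_0$, and property~(2) places $P_m,P_\infty$ in $M^*$ so the whole separation happens inside $\hat M$. The only genuinely fussy point is the case $x_0\in J^*$, resolved above by recognizing $x_0$ as an endpoint of the crosscut rather than an interior crossing.
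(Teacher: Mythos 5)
Your construction of the separating arc takes a genuinely different route from the paper's: you obtain a simple closed curve $\Gamma$ from Lemma~\ref{separation-theorem} via Remark~\ref{separation-case} and then try to extract a single crosscut of the disk $D=J^*\cup Int(J^*)$, whereas the paper applies Lemma~\ref{A} to make $P_k$ a component of $(F\cap M^*)\cup\cl(U_1^*)\cup\cdots\cup\cl(U_q^*)$, takes a separation $A\cup B$ of that compact set by Theorem~\ref{O}, and invokes the Separation Lemma~\ref{B} on $D\cong[0,1]^2$ to produce the arc directly in $D^o\setminus(A\cup B)\subset M^*\setminus F$. Your route is workable in principle (though the single-crosscut extraction is really an appeal to unicoherence of the disk and needs more than the ``tree of regions'' heuristic, especially when $\Gamma\cap Int(J^*)$ has infinitely many components or $\Gamma\subset Int(J^*)$), but that is not where the real problem lies.

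The genuine gap is in the diameter estimate. At the $y$-end you argue that, because the endpoints of $\gamma$ separate $y_m$ from $y_0$ on the Jordan curve $J^*$ and $|y_m-y_0|\to0$, one endpoint of $\gamma$ is squeezed to within $o(1)$ of $y_0$. This inference is invalid: two points of a Jordan curve can be arbitrarily close in the plane while both arcs of the curve joining them have large diameter (picture a long thin U with $y_m$ and $y_0$ on opposite prongs), so the pair of points separating them on the curve can stay far from $y_0$. At the $x$-end, the subcase $x_0\in J^*$ is resolved by declaring $x_0$ an endpoint of $\gamma$; this is impossible, since $x_0\in P_\infty\subset F\cap M^*$ while $\cl(\gamma)\subset\Gamma$ was chosen disjoint from $F\cap M^*$. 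Both ends need the device the paper actually uses: since $D$ is a closed topological disk, hence uniformly locally arcwise connected, for $k$ large one can join $x_k$ to $x_0$ and $y_k$ to $y_0$ by arcs $\gamma_1,\gamma_2$ of diameter less than $\varepsilon_0$ lying in $Int(J^*)$; any arc whose closure separates $P_k$ from $P_\infty$ in $D$ must meet both $\gamma_1$ and $\gamma_2$, whence $\diam(\gamma)\ge|x_0-y_0|-2\varepsilon_0\ge2\varepsilon_0>n^{-N}\sqrt{2}(n^2+1)^2/n$. Without such connecting arcs, your crosscut is only known to stay within $2\epsilon$ of $P_m$, which yields no lower bound on its diameter at all.
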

Let $U_1^*,\ldots, U_q^*$ be the bounded components of $\bbR^2\setminus M^*$. Then $Int(J^*)\setminus M^*=U_1^*\cup\cdots\cup U_q^*$. Moreover, the closure $\cl(U_i^*)$ is disjoint from $\bigcup_{k\ge k_0}P_k$ for $1\le i\le q$. By Lemma \ref{A}, every  $P_k$ with $k\ge k_0$ is also a component of
$\displaystyle (F\cap M^*)\cup\cl(U_1^*)\cup\cdots\cup\cl(U_q^*)$. Recall that $D:=J^*\cup Int(J^*)$ is a topological disk. For the previously fixed constant $\varepsilon_0>0$, choose a component $P_k$ with $|x_k-x_0|$ and $|y_k-y_0|$ small enough such that $x_k$ and $y_k$ are respectively connected to $x_0$ and $y_0$ by open arcs $\gamma_1, \gamma_2\subset Int(J^*)$, with diameters smaller than $\varepsilon_0$. By Theorem \ref{O}, there is a separation $\displaystyle (F\cap M^*)\cup\cl(U_1^*)\cup\cdots\cup\cl(U_q^*)=A\cup B$
with $P_k\subset A$ and  $\left[P_\infty\cup\cl(U_1^*)\cup\cdots\cup\cl(U_q^*)\right]\subset B$. Finally, consider $D$ as a homeomorphic image of $[0,1]^2$. By Lemma \ref{B}, we can find an open arc $\gamma$ in $D^o\setminus(A\cup B)$ such that $\cl(\gamma)$ separates $P_k$ from $P_\infty$ in $D$.
Since $\gamma$ must intersect each of $\gamma_1$ and $\gamma_2$, we have
${\rm diam}(\gamma)\ge |x_0-y_0|-{\rm diam}(\gamma_1)-{\rm diam}(\gamma_2)>2\varepsilon_0$ which is greater than $n^{-N}\!\sqrt{2}(n^2+1)^2/n$, by the choice of $N$ in Lemma \ref{integer-N}.
\end{proof}

\section{A self-similar set that has a non-path-connected component}\label{non-path}

This section gives a self-similar set in $\mathbb{R}^3$ that has a non-path-connected component.

Let $E$ and $F$ be two subset of ${\mathbb R}$ defined by
 $$E=\{0\}\cup \bigcup_{k=1}^\infty \left[2^{-2k+1},2^{-2k+2}\right],\quad
 F=\{0\}\cup \bigcup_{k=1}^\infty \left[2^{-4k+2},2^{-4k+3}\right].$$
Then
\begin{align}
[0,1]&=E\cup\frac12E,  & E&=\frac{1}{4}E\cup\left[\frac12,1\right].\\
\left[0,\frac12\right]&=F\cup\frac12F\cup\frac14F\cup\frac18F,
& F&=\frac{1}{16}F\cup\left[\frac14,\frac12\right].
\end{align}
It follows that $[0,1]=E\cup F\cup \frac14F$.  Moreover, $E$ and $F$ are self-similar sets satisfying the open set condition, namely,
\begin{equation}\label{ifs-E}
E=\frac{E}{4}\cup \left (\frac{E}{4}+\frac12 \right ) \cup \left (\frac{E}{2}+\frac12 \right ),
\end{equation}
\begin{equation}\label{ifs-F}
F=\frac{F}{16}\cup \left[\left(\frac12F\cup\frac14F\cup\frac18F\cup\frac{1}{16}F\right)+\frac{1}{4}\right].
\end{equation}

\begin{lemm} $X=E\times[0,1]$ {\rm(green part in Figure \ref{X^*})} is a self-similar set with open set condition.
\end{lemm}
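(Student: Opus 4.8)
The plan is to realize $X=E\times[0,1]$ as the attractor of a product iterated function system assembled from the IFS $\{g_1,g_2,g_3\}$ for $E$ recorded in \eqref{ifs-E}, where $g_1(t)=t/4$, $g_2(t)=t/4+1/2$, $g_3(t)=t/2+1/2$ have ratios $r_1=r_2=1/4$ and $r_3=1/2$. To keep every map a similitude (so that $X$ is in fact strictly self-similar), I would pair each $g_i$ with the standard subdivision of the unit interval into subintervals of length $r_i$, that is, with the maps $h_{i,j}(s)=r_i s+jr_i$ for $j=0,1,\dots,1/r_i-1$ (so $j\in\{0,1,2,3\}$ when $i\in\{1,2\}$ and $j\in\{0,1\}$ when $i=3$). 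Putting
$$\phi_{i,j}(t,s)=\bigl(g_i(t),\,h_{i,j}(s)\bigr),$$
each $\phi_{i,j}$ is a similitude of ratio $r_i$ with neither rotation nor reflection, and $\mathcal{G}=\{\phi_{i,j}\}$ consists of $4+4+2=10$ maps.

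Next I would confirm that $X$ is precisely the attractor of $\mathcal{G}$. Since $\bigcup_j h_{i,j}([0,1])=[0,1]$ for each fixed $i$, we get $\bigcup_j\phi_{i,j}(X)=g_i(E)\times[0,1]$; taking the union over $i$ and invoking \eqref{ifs-E} yields
$$\bigcup_{i,j}\phi_{i,j}(X)=\bigl(g_1(E)\cup g_2(E)\cup g_3(E)\bigr)\times[0,1]=E\times[0,1]=X.$$
Thus $X$ solves the invariance equation for $\mathcal{G}$, and by uniqueness of the attractor $X$ is the self-similar set determined by $\mathcal{G}$.

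To verify the open set condition I would use that $E$ satisfies the OSC for $\{g_1,g_2,g_3\}$, as asserted just before the lemma: fix a nonempty bounded open $O_E\subset(0,1)$ with $g_i(O_E)\subset O_E$ and the $g_i(O_E)$ pairwise disjoint. I claim $O:=O_E\times(0,1)$ witnesses the OSC for $\mathcal{G}$. Indeed $\phi_{i,j}(O)=g_i(O_E)\times(jr_i,(j+1)r_i)\subset O_E\times(0,1)=O$, and for distinct pairs the images are disjoint: if $i\ne i'$ the first factors are disjoint by the OSC of $E$, while if $i=i'$ and $j\ne j'$ the second factors $(jr_i,(j+1)r_i)$ and $(j'r_i,(j'+1)r_i)$ are disjoint. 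Hence $\mathcal{G}$ satisfies the open set condition, proving the lemma.

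The construction is routine once the right bookkeeping is set up, and the only point that needs care is the matching of contraction ratios: because $g_3$ contracts by $1/2$ while $g_1,g_2$ contract by $1/4$, a single interval map in the second coordinate would not produce similitudes, which is why I subdivide $[0,1]$ into pieces of length $r_i$. The genuinely substantial ingredient, the open set condition for the one-dimensional set $E$ itself, is supplied by the hypothesis stated with \eqref{ifs-E}; the product open set $O_E\times(0,1)$ then inherits disjointness of images from the first coordinate (the OSC of $E$) and from the second coordinate (the trivial tiling of $[0,1]$), so no further work is needed there.
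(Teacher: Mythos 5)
Your IFS is exactly the one the paper writes down: the ten maps $(x,y)\mapsto\bigl((x+a)/4,(y+j)/4\bigr)$ for $a=0,2$, $j=0,1,2,3$, together with $(x,y)\mapsto\bigl((x+1)/2,(y+j)/2\bigr)$ for $j=0,1$, are precisely your $\phi_{i,j}$, and your verification that $X$ solves the invariance equation is the detail the paper dismisses as ``easy to check.'' Where you genuinely diverge is the open set condition. The paper invokes Schief's theorem \cite{Schief-1994}: the similarity dimension of the ten-map system is $2$ (since $8\cdot(1/4)^2+2\cdot(1/2)^2=1$) and $X\supset[1/2,1]\times[0,1]$ has nonempty interior, hence positive $\mathcal{H}^2$-measure, which forces the OSC. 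You instead build an explicit open set $O_E\times(0,1)$ by transporting the OSC of the one-dimensional system \eqref{ifs-E} to the product. Both routes are correct, and your product-set argument is sound: containment and pairwise disjointness of the images of $O_E\times(0,1)$ follow coordinate by coordinate exactly as you say. The trade-off is that your route inherits a dependency on an OSC open set for $E$, which the paper asserts but never exhibits; note that $(0,1)$ itself does \emph{not} work, since $(0,1)/4+\tfrac12=(\tfrac12,\tfrac34)$ meets $(0,1)/2+\tfrac12=(\tfrac12,1)$, so one must take something like the interior of $E$, for which $U\cap 2U=\emptyset$ gives the needed disjointness. (The extra condition $O_E\subset(0,1)$ you impose is never actually used in your argument, so it is harmless.) The paper's appeal to Schief sidesteps the one-dimensional OSC entirely and needs only the similarity-dimension computation plus the observation that $X$ has interior; your version is more elementary and exhibits a concrete separating open set, at the cost of that extra input.
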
\label{X-IFS}
\begin{proof} The IFS of $E$ is $\displaystyle \left\{ \frac{x}{4},\frac{x+2}{4}, \frac{x+1}{2}\right\}$. We extend $x\mapsto \frac{x}{4}$ to the following $4$ maps:
 $$
 (x,y)\mapsto (x,y+j)/{4}, \quad j=0,1,2,3;
 $$
 extend $x\mapsto \frac{x}{4}+\frac12$ to
 $$
 (x,y)\mapsto {(x+2,y+j)}/{4}, \quad j=0,1,2,3;
 $$
 extend $x\mapsto \frac{x+1}{2}$ to
 $$
 (x,y)\mapsto {(x+1,y+j)}/{2}, \quad j=0,1.
 $$
 It is easy to check that these extended maps form an IFS of $X$. Since the similarity dimension of
 this IFS is $2$ and $X$ has non-empty interior, the IFS satisfies the open set condition by a result of Schief \cite{Schief-1994}.
 \end{proof}

Let
 $X^*=X\cup\left(F\times\{1\}\right)\cup \left(\frac14F\times\{0\}\right)$.
  Note that $X^*$ is a continuum having two path components, one is $\{0\}\times [0,1]$, and the other one is the rest of $X^*$. See Figure \ref{X^*}.
We shall construct a self-similar set $G\subset\mathbb R^3$ which contains a component homeomorphic with $X^*$. To this end, we shall construct an IFS consisting of three parts:  the first part generates copies of $F$ and $F/4$ respectively,  the second part generates
a copy of $X$, and the third part put them together properly.
\begin{figure}[ht]
  \begin{center} \vskip -0.25cm
  \includegraphics[width=0.45\textwidth]{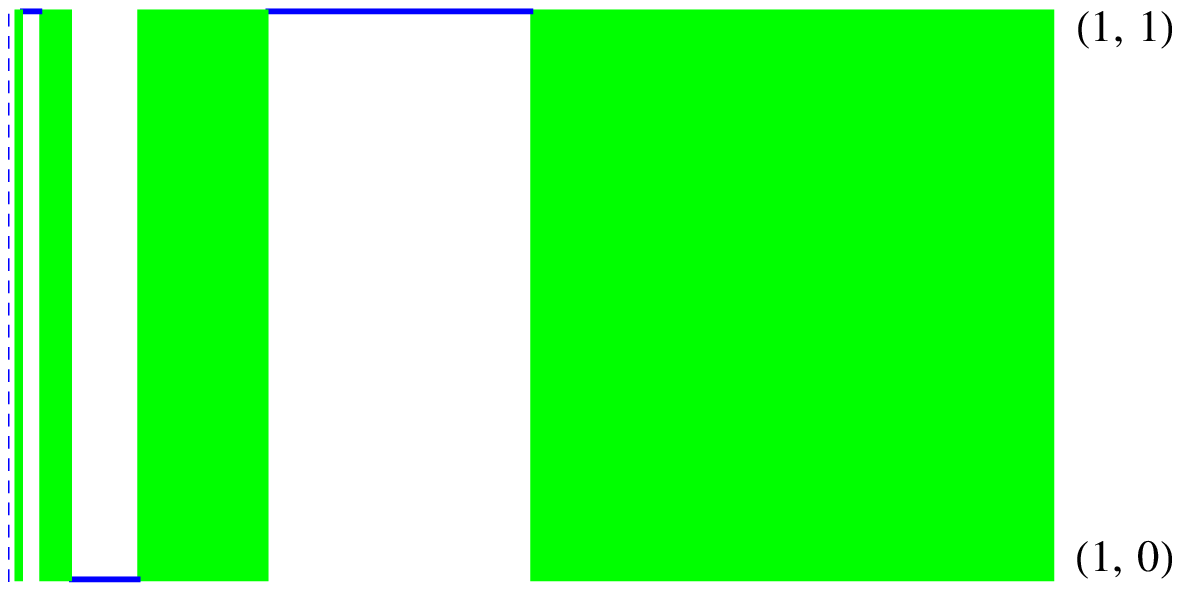}
  \vskip -0.5cm
  \caption{$X^*=X\cup\left(F\times\{1\}\right)\cup \left(\frac14F\times\{0\}\right)$.}
  \label{X^*}
  \end{center}\vskip -0.5cm
\end{figure}
Let ${\cal F}_1$ be an IFS generating $F$, and ${\cal F}_2$ an IFS generating $F/4$ such that the maximal contraction ratios of them are less than $1/5$, which can be obtained by iterating the IFS in \eqref{ifs-F}. Next, for a map $x\mapsto \lambda x+a$ of ${\cal F}_1$, we extend it to a $3$-dimensional map
$$(x,y,z)\mapsto \lambda(x,y,z)+(a,1-\lambda,1-\lambda);
$$
 for a map $x\mapsto \lambda x+a$ of ${\cal F}_2$, we extend it to a map
$$
(x,y,z)\mapsto \lambda(x,y,z)+(a,0,1-\lambda).
$$
Let us denote the extended IFS by ${\mathcal F}_1^*$ and
${\mathcal F}_2^*$, respectively. Let ${\mathcal F}^*={\mathcal F}_1^*\cup {\mathcal F}_2^*$, which is the first part of our IFS.
The following lemma is obvious.

\begin{lemm} The attractor of ${\mathcal F}^*$, denote by $H$, is a subset of $[0,1]^2\times\{1\}$.
Moreover,
$$
H\cap {\mathbb R}\times \{1\} \times \{1\}=F\times \{1\}\times\{1\},
$$
$$
H\cap {\mathbb R}\times \{0\} \times \{1\}=F/4\times \{0\}\times\{1\}.
$$
Furthermore, every component of $F\times \{1\}\times \{1\}$
or $F/4\times \{0\}\times \{1\}$ is also a component of $H$.
\end{lemm}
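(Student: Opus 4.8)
The plan is to establish the four assertions of the lemma in turn: (a) $H\subseteq[0,1]^2\times\{1\}$; (b) $H\cap(\bbR\times\{1\}\times\{1\})=F\times\{1\}\times\{1\}$; (c) $H\cap(\bbR\times\{0\}\times\{1\})=\frac14F\times\{0\}\times\{1\}$; and (d) the statement about components. I record once that a map of $\mathcal{F}_1^*$ with ratio $\lambda$ acts on the last two coordinates by $y\mapsto\lambda y+1-\lambda$ and $z\mapsto\lambda z+1-\lambda$, both fixing the value $1$, whereas a map of $\mathcal{F}_2^*$ acts by $y\mapsto\lambda y$ (fixing $0$) and $z\mapsto\lambda z+1-\lambda$; and that every ratio is smaller than $1/5$. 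For (a) I would check that the compact set $[0,1]^2\times\{1\}$ is carried into itself by every map of $\mathcal{F}^*$: the coordinate map $z\mapsto\lambda z+1-\lambda$ fixes $1$; the $y$-images of $[0,1]$ are $[1-\lambda,1]$ or $[0,\lambda]$, both inside $[0,1]$; and the first coordinate of every map is a map of $\mathcal{F}_1$ or $\mathcal{F}_2$, each of which preserves $[0,1]$. Since the attractor lies in every nonempty compact invariant set, $H\subseteq[0,1]^2\times\{1\}$ follows at once.

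Parts (b) and (c) I would obtain by a peeling argument on the symbolic coding. Because a map of $\mathcal{F}_2^*$ sends every point of $H\subseteq\{0\le y\le 1\}$ into $\{y\le\lambda<1\}$, any point of $H$ with $y=1$ must be the image, under a map of $\mathcal{F}_1^*$, of a point of $H$ again having $y=1$; iterating this and using that cylinders shrink to points shows that $H\cap\{y=1\}$ is the attractor of $\mathcal{F}_1^*$. A direct computation shows that $F\times\{1\}\times\{1\}$ is exactly invariant under $\mathcal{F}_1^*$, since its images reassemble $\bigcup_{\mathcal{F}_1}f(F)=F$ in the first coordinate while the last two stay equal to $1$; hence it is that attractor, which gives (b). The same argument carried out at $y=0$, with the roles of $\mathcal{F}_1^*$ and $\mathcal{F}_2^*$ interchanged (now the maps of $\mathcal{F}_1^*$ are excluded because they force $y\ge 1-\lambda>0$), yields (c).

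The crux is (d), and here the engine is the total disconnectedness of the $y$-projection $\pi_y(H)$. Because every map contracts the $y$-coordinate into $[0,\lambda]\cup[1-\lambda,1]\subseteq[0,1/5]\cup[4/5,1]$, the set $\pi_y(H)$ is the attractor of the one-dimensional IFS $\{y\mapsto\lambda y\}\cup\{y\mapsto\lambda y+1-\lambda\}$; it omits the open gap $(1/5,4/5)$ and, being self-similar, omits a proportional gap inside every cylinder, so it is a Cantor set. Consequently every connected subset of $H$ has constant $y$-coordinate. Thus a component of $H$ meeting $\{y=1\}$ is contained in $H\cap\{y=1\}=F\times\{1\}\times\{1\}$ and is therefore a component of the latter; conversely, the $H$-component of any component $P$ of $F\times\{1\}\times\{1\}$ meets $\{y=1\}$, hence lies in $F\times\{1\}\times\{1\}$ and, by maximality of $P$ there, equals $P$. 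The identical reasoning applied at $\{y=0\}$ handles $\frac14F\times\{0\}\times\{1\}$, completing (d).

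I expect the only genuine obstacle to be isolating the correct mechanism in (d): recognizing that the component statement reduces to total disconnectedness of $\pi_y(H)$, and that this in turn is precisely what the bound on the contraction ratios secures (indeed a bound $\lambda<1/2$ already forces the $y$-gap used here). Without such a gap the scaled copies of $H$ that accumulate below the slices $\{y=1\}$ and $\{y=0\}$ could, a priori, merge distinct components; the gap is exactly what rules this out. The remaining parts are routine invariance and peeling computations, which is presumably why the authors call the lemma obvious.
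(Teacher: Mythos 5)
The paper offers no argument for this lemma (it is introduced with ``The following lemma is obvious''), so there is no official proof to compare against; judged on its own, your decomposition into (a)--(d) is the natural one and parts (a)--(c) are correct. The genuine gap is in the step you yourself call the crux: the claim that $\pi_y(H)$ is totally disconnected because it ``omits the open gap $(1/5,4/5)$ and, being self-similar, omits a proportional gap inside every cylinder.'' That inference is invalid for an IFS with overlaps, and the induced $y$-IFS here overlaps heavily: its maps are $y\mapsto\lambda y$ and $y\mapsto\lambda y+1-\lambda$ with $\lambda$ running over the ratio multisets of $\mathcal{F}_1$ and $\mathcal{F}_2$, and each multiset has ratio sum $1$ (the ratios in \eqref{ifs-F} are $\tfrac1{16},\tfrac12,\tfrac14,\tfrac18,\tfrac1{16}$, summing to $1$, and this is preserved under iteration), so the total is $2$. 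A cylinder omitting a scaled copy of the gap does not prevent other, overlapping cylinders from filling it. Concretely, the one-dimensional IFS $\{y\mapsto\rho_k y,\ y\mapsto\rho_k y+1-\rho_k:\ k=0,\dots,10\}$ with $\rho_k=\tfrac16(\tfrac56)^k$ has all ratios below $1/5$, has the same ``every map fixes $0$ or $1$'' structure, has the top-level gap $(\tfrac16,\tfrac56)$, and every cylinder omits a proportional gap --- yet its attractor is exactly $[0,\tfrac16]\cup[\tfrac56,1]$, which contains intervals. So your parenthetical remark that a bound $\lambda<1/2$ already suffices is wrong, and the argument as written does not establish the key fact.

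The fact is nevertheless true, for a reason you did not invoke: the $y$-part of each map of $\mathcal{F}^*$ depends only on its contraction ratio, so duplicate maps collapse, and the ratios available after iterating \eqref{ifs-F} are powers of $\tfrac12$ below $\tfrac15$, hence of the form $2^{-j}$ with $j\geq 3$. The induced $y$-IFS therefore has, after removing duplicates, ratio sum at most $2\sum_{j\geq3}2^{-j}=\tfrac12<1$; writing $K=\pi_y(H)$ and using $|K|\leq\sum_i r_i|K|$ for Lebesgue measure gives $|K|=0$, so $K$ contains no nondegenerate interval and, being a subset of $\bbR$, is totally disconnected. With this substitute in place, the remainder of your part (d) --- components of $H$ have constant $y$-coordinate, then maximality on the slices $y=1$ and $y=0$ --- goes through exactly as you wrote it.
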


Iterating the IFS for $X$ in Lemma \ref{X-IFS}, we obtain an IFS  of $X$, denoted as ${\mathcal G}$, such that the maximal contraction ration is less than $1/5$.
Extend each map $(x,y)\mapsto \lambda(x,y)+(a,b)$ in ${\mathcal G}$ to a map
$(x,y,z)\mapsto \lambda(x,y,z)+(a,b,0)$. Then $X\times \{0\}$ is the attractor of the resulted IFS, which we denote  by ${\mathcal G}^*$. This is the second part of our IFS.
Finally, let
$$
h_1(x,y,z)=\frac15(x,y,z)+\left(\frac25,\frac25,\frac25\right),
$$
$$
h_2(x,y,z)=\frac15(x,y,z)+\left(\frac25,\frac45,\frac25\right),
$$
$$
h_3(x,y,z)=\frac15(x,y,z)+\left(\frac25,\frac35,\frac35\right),
$$
and set ${\mathcal F}={\mathcal F}^*\cup {\mathcal G}^*\cup \{h_1,h_2,h_3\}.$
Let $G$ be the attractor of this IFS.

As in Example \ref{fractal-cube}, we call the intersection $G\cap \{(x,y,z): \  z=z_0\}$ the $z_0$-section of $G$. See Figure \ref{z-section} for two of the $z$-sections.
\begin{figure}[H]
  \begin{center}\begin{tabular}{ll}
  \includegraphics[width=0.3725\textwidth]{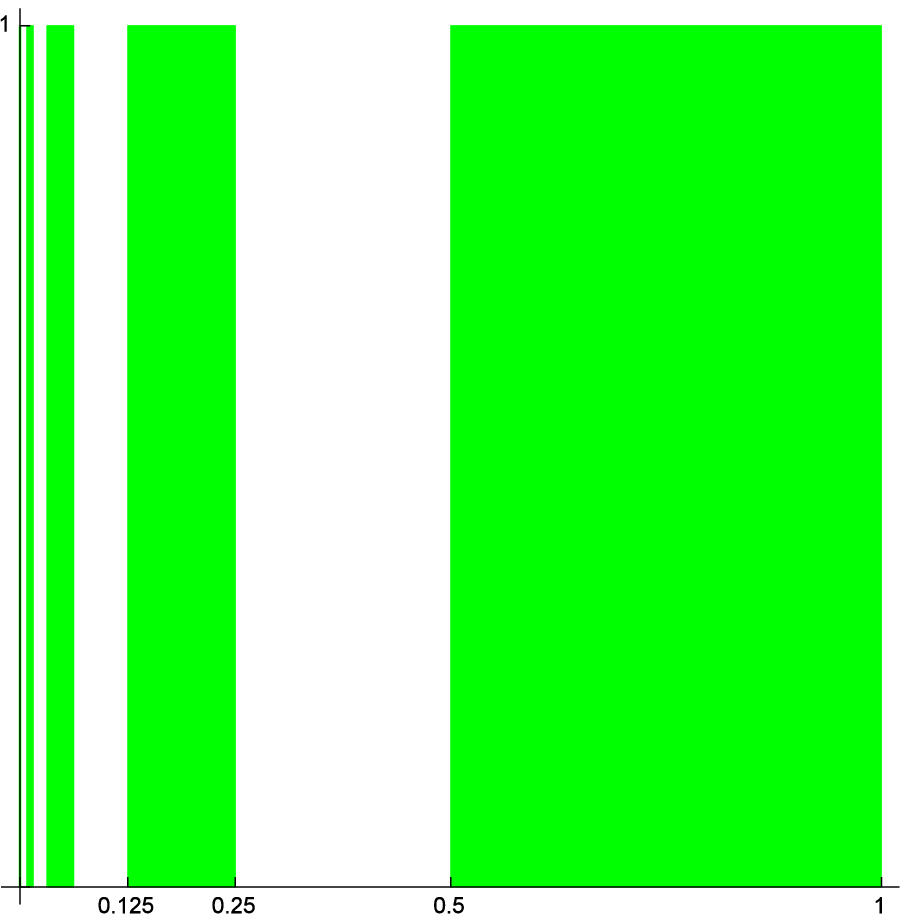}&
    \includegraphics[width=0.20\textwidth]{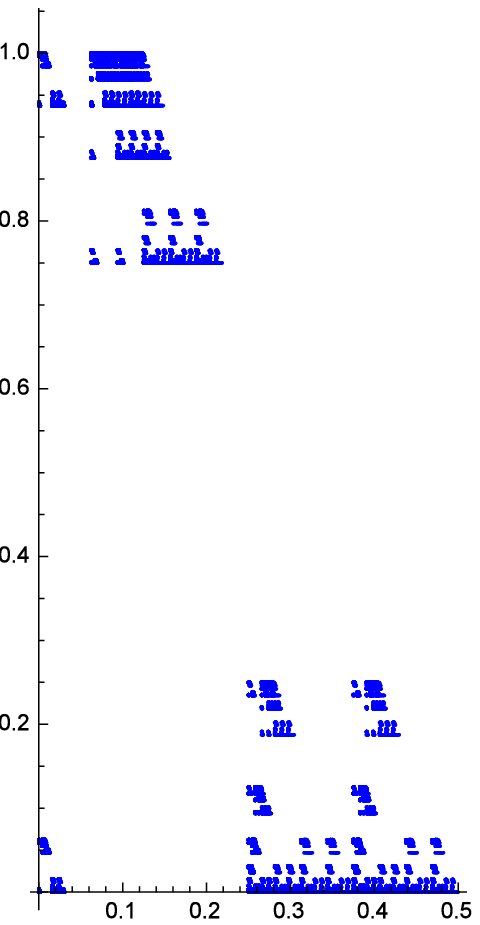}
  \end{tabular}
  \vskip 0cm
  \caption{The two $z$-sections of $G$ at $z=0,1$.}
  \label{z-section}
  \end{center}
\end{figure}

\begin{lemm} (i) Each $f(x,y,z)=\lambda(x,y,z)+(a,b,c)\in {\mathcal F}$ maps $[0,1]^3$ into itself and satisfies $c\notin(3/5,4/5)$.

(ii) The $0$-section of $G$ is $X\times \{0\}$, the $1$-section of $G$ is $H$.

(iii) Under the projection $(x,y,z)\mapsto z$, the image of $G$ is a Cantor set (and hence totally disconnected). Therefore,  a component
of $G$ must belong to a single section.
\end{lemm}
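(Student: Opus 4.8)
The plan is to dispatch (i) and (ii) by inspection and to reduce (iii) to a measure estimate on the $z$–projection.

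For (i) I would read off, for each of the four families of generators, the triple $(\lambda,a,b,c)$ in $f(x,y,z)=\lambda(x,y,z)+(a,b,c)$ and check $0\le a,b,c$ and $a+\lambda,b+\lambda,c+\lambda\le 1$. For $f\in\mathcal F_1^*\cup\mathcal F_2^*$ one uses that the one–dimensional generators of $\mathcal F_1,\mathcal F_2$ already carry $[0,1]$ into $[0,1]$ (a property preserved under the iteration that lowers their ratios below $1/5$) and that $\lambda<1/5$, so $c=1-\lambda>4/5$; for $\mathcal G^*$ one has $c=0$, using that $\mathcal G$ carries $[0,1]^2$ into itself; and for $h_1,h_2,h_3$ one has $c\in\{2/5,3/5\}$ directly. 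In every case $c\in\{0,2/5,3/5\}\cup(4/5,1)$, so $c\notin(3/5,4/5)$.

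For (ii) the decisive point, read off from (i), is that the $z$–image $[c,c+\lambda]$ of a generator meets $\{z=0\}$ only when $f\in\mathcal G^*$ (where $c=0$) and meets $\{z=1\}$ only when $f\in\mathcal F^*$ (where $c+\lambda=1$), all other generators having $z$–image inside $[2/5,4/5]$. Intersecting $G=\bigcup_{f\in\mathcal F}f(G)$ with $\{z=0\}$ therefore annihilates every term except the $\mathcal G^*$–terms, and since the $z$–component $z\mapsto\lambda z$ of such an $f$ gives $f^{-1}\{z=0\}=\{z=0\}$, the $0$–section $S_0:=G\cap\{z=0\}$ is a nonempty compact set (nonempty because $0$ is the common fixed point of the $\mathcal G^*$–projections) satisfying $S_0=\bigcup_{f\in\mathcal G^*}f(S_0)$. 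Uniqueness of the attractor of $\mathcal G^*$ then forces $S_0=X\times\{0\}$, and the symmetric argument with $\{z=1\}$ and the $1$–fixing family $\mathcal F^*$ gives $S_1=H$.

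For (iii) I would pass to $Z:=\pi(G)\subset[0,1]$ with $\pi(x,y,z)=z$. Since $\pi(f(G))=\lambda_f Z+c_f$, the set $Z$ is the attractor of the one–dimensional IFS whose generators are the \emph{distinct} projections $z\mapsto\lambda z+c$: the maps $z\mapsto rz$ with $r$ a $\mathcal G^*$–ratio, the two maps $z\mapsto z/5+2/5$ and $z\mapsto z/5+3/5$, and the maps $z\mapsto\mu z+1-\mu$ with $\mu$ an $\mathcal F^*$–ratio. The crux is that the sum of these distinct ratios is $<1$: the $\mathcal G^*$–ratios are $2^{-(k+a)}$, $0\le a\le k$, summing to $<2\cdot 2^{-k}=2\mu_2$; the two middle maps give $2/5$; and the $\mathcal F^*$–ratios are the numbers $2^{-s}$ arising as products of the base ratios $\{2^{-1},2^{-2},2^{-3},2^{-4}\}$ of \eqref{ifs-F}, summing to $<2\mu_1$. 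As the requirement ``ratio $<1/5$'' forces iteration level $\ge 3$, we get $\mu_1,\mu_2\le 1/8$, hence a total of at most $2\mu_2+2/5+2\mu_1\le 9/10<1$. Applying subadditivity of Lebesgue measure to $Z=\bigcup_i\phi_i(Z)$ gives $|Z|\le\big(\sum_i\mathrm{ratio}_i\big)|Z|\le\tfrac{9}{10}|Z|$, so $|Z|=0$; a compact null subset of $\mathbb R$ has empty interior and is therefore totally disconnected. Consequently $\pi(K)$, being a connected subset of $Z$ for any component $K$ of $G$, is a single point, so $K$ lies in one $z$–section.

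The main obstacle is precisely this ratio bookkeeping. One is tempted to bound $|Z|$ by the ratio sum of the three–dimensional IFS, but that sum is enormous—already $\mathcal G$ has ratio sum $3^k$—since the many generators fixing $z=0$ (from $\mathcal G^*$) and $z=1$ (from $\mathcal F^*$) have heavily overlapping $z$–images, so the projected system badly fails the open set condition. The resolution, and the only delicate step, is that upon projection these overlapping families collapse onto geometric progressions of \emph{distinct} maps near $0$ and near $1$, each with ratio sum controlled by twice the maximal ratio ($\le 1/4$); this is exactly what pulls the one–dimensional ratio sum below $1$. For this collapse to be clean one should take $\mathcal F_2$ with the same ratios as $\mathcal F_1$ (for instance a conjugate IFS), so that the $z$–projections of $\mathcal F_1^*$ and $\mathcal F_2^*$ genuinely coincide instead of doubling the count.
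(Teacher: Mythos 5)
Your proof is correct, and for parts (i) and (ii) it coincides with the paper's (the paper disposes of (i) with ``by construction'' and of (ii) with exactly your observation that only the maps with $c=0$, resp.\ $c+\lambda=1$, can meet the section $\{z=0\}$, resp.\ $\{z=1\}$, and that these maps preserve that section, so the section satisfies the defining equation of the attractor of $\mathcal G^*$, resp.\ $\mathcal F^*$). Where you genuinely diverge is (iii): the paper's entire proof is the sentence ``This follows from the fact that $c\le 3/5$ or $c\ge 4/5$,'' i.e.\ it merely points at the gap structure of the intervals $[c,c+\lambda]$ covering $\pi(G)$ and leaves the reader to conclude total disconnectedness. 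Your measure-theoretic route --- identify $\pi(G)$ as the attractor of the collapsed one-dimensional IFS of \emph{distinct} projected maps, check that their ratio sum is at most $2\cdot\frac18+\frac25+2\cdot\frac18=\frac9{10}<1$, deduce $|\pi(G)|=0$ by subadditivity, hence no nondegenerate interval, hence each component of $G$ projects to a point --- is a complete argument, and it correctly confronts the real difficulty that the three-dimensional system has enormous ratio sum and badly overlapping $z$-images, which a naive first-level-gap argument does not by itself resolve (overlapping pieces near $0$ and $1$ could in principle chain together under iteration). Your closing caveat that $\mathcal F_2$ should be chosen with the same ratio set as $\mathcal F_1$ (e.g.\ the conjugate IFS of $F/4$, which the paper implicitly intends) is a fair and necessary observation for the bookkeeping to close. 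Two minor remarks: the lemma asserts ``Cantor set'' but only total disconnectedness is used downstream, and neither you nor the paper proves perfectness --- this is harmless; and in place of the measure argument one could also argue via the gaps $(1/8,2/5)$ and $(4/5,7/8)$ together with the fixed-point structure at $0$ and $1$, but your version is cleaner and fully rigorous.
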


\begin{proof} (i) By our construction of ${\mathcal F}$.

(ii) Let $f=\lambda(x,y,z)+(a,b,c)\in {\mathcal F}$. Since $\lambda\leq 1/5$, $f(G)$ does not intersect the $0$-section if $c\geq 1/5$; and those $f$ must belong to ${\mathcal G}$. It follows that the $0$-section is the attractor of ${\mathcal G}$. The second assertion can be proved in the same way.

(iii) This follows from the fact that $c\leq 3/5$ or $c\ge 4/5$.
\end{proof}

Now we can check that
$$h_3(0\text{-section})\cup h_1(G\cap \mathbb R\times \{1\}\times \{1\} )\cup h_2(\mathbb R\times \{0\}\times \{1\})$$
equals the $3/5$-section of $G$. Moreover, every component of this section is also a component of $G$, one of which is a translation of $\displaystyle \frac{X^*\times\{0\}}{5}:=\left\{\frac{1}{5}(x,y,0): (x,y)\in X^*\right\}$.

\appendix{\noindent \textbf{Appendix A. The Proof of Examples 1.3}}\label{non-lc-component}

\setcounter{section}{0}\stepcounter{section}\def\thesection{\Alph{section}}

Recall that $\Fc_3$ in Example 1.3 actually equals $\displaystyle\left\{  \frac{x+d_k}{4};~\ 1\le k\le 12\right\}$,  where
\[\displaystyle \{d_1,\ldots,d_{12}\}=\left \{  0,\mi, 2\mi, 3\mi, 3, 3+\mi, 3+2\mi, 3+3\mi, 4+3\mi, 5+3\mi, 6+3\mi, 7+3\mi \right\}.\]
Here $\mi$ denotes the imaginary unit with $\mi^2=-1$. In particular, we write
\[f_4(x)= \frac{x+3\mi}{4},\quad f_8(x)=\frac{x+3+3\mi}{4},\quad \quad f_9(x)=\frac{x+4+3\mi}{4},
\quad \quad f_{12}(x)=\frac{x+7+3\mi}{4}.\]
Then $f_8\circ f_9=f_9\circ f_4$, indicating that $\Fc_3$ does not satisfy the open set condition. Moreover, we can obtain the following lemma by routine computations.

\begin{lemm}\label{E_3-a}
Let $Q$ be the closed trapezoid whose vertices are $0, 1, \mi$ and $\frac73+\mi$.
Let $A$ be the line segment $\{t+\mi: 1\le t\le\frac73\}$ and $B=\{\frac58+t\mi: 0\le t\le1\}$. Then we have $A\subset E_3\subset Q$ and $B\cap E_3=\emptyset$. \hfill{\rm (We note that $\frac58$ is the average of $\frac{7}{12}$ and $\frac34$.)}
\end{lemm}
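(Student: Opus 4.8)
The plan is to establish the three assertions $A\subset E_3$, $E_3\subset Q$ and $B\cap E_3=\emptyset$ one at a time, each by an elementary computation with the twelve digits, where I write $d_k=(a_k,b_k)$ and $f_k(x,y)=\big((x+a_k)/4,(y+b_k)/4\big)$. \emph{For $E_3\subset Q$}, I would describe the trapezoid as $Q=\{(x,y):0\le y\le1,\ 0\le x\le 1+\tfrac43y\}$ and check that $Q$ is forward invariant, i.e.\ $f_k(Q)\subset Q$ for every $k$. The vertical condition $0\le(y+b_k)/4\le1$ follows from $0\le b_k\le3$, and $(x+a_k)/4\ge0$ is clear; the only substantive inequality is the one for the slanted right edge, which after inserting $x\le1+\tfrac43y$ and cancelling the common term $\tfrac13y$ reduces to $a_k\le 3+\tfrac43 b_k$. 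A glance at the digit list confirms this for all twelve maps (with equality exactly for $d_5=3$ and $d_{12}=7+3\mi$). Since $Q$ is compact and forward invariant, uniqueness of the attractor yields $E_3\subset Q$.

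\emph{For $A\subset E_3$}, I would single out the sub-family $S=\{f_8,f_9,f_{10},f_{11},f_{12}\}$, namely the maps whose digit has imaginary part $3\mi$ and real part $\ge3$. Each such $f_k$ carries the line $y=1$ into itself (because $(1+3)/4=1$) and acts on the $x$-coordinate by $t\mapsto(t+a_k)/4$; computing the images of the $x$-range $[1,\tfrac73]$ of $A$ gives the five intervals $[1,\tfrac43]$, $[\tfrac54,\tfrac{19}{12}]$, $[\tfrac32,\tfrac{11}{6}]$, $[\tfrac74,\tfrac{25}{12}]$, $[2,\tfrac73]$. These lie in $[1,\tfrac73]$ and, since consecutive intervals overlap, their union is exactly $[1,\tfrac73]$. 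Hence $A=\bigcup_{k\in S}f_k(A)$, so $A$ is the attractor of the sub-IFS $\{f_k:k\in S\}$. Because the attractor of a subsystem is contained in that of the full system (iterate $\bigcup_{k\in S}f_k(\cdot)$ starting from $E_3$: each iterate stays inside the closed invariant set $E_3$, and the iterates converge to $A$ in Hausdorff distance), we conclude $A\subset E_3$.

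\emph{For $B\cap E_3=\emptyset$}, let $\pi(x,y)=x$ be the projection to the first coordinate. From $E_3\subset Q$ and $E_3=\bigcup_k f_k(E_3)$ we get $E_3\subset\bigcup_k f_k(Q)$, and since $\pi(Q)=[0,\tfrac73]$ the projection of $f_k(Q)$ is $[a_k/4,(a_k+\tfrac73)/4]$. Taking the union over the occurring values $a_k\in\{0,3,4,5,6,7\}$ gives $\pi\big(\bigcup_kf_k(Q)\big)=[0,\tfrac7{12}]\cup[\tfrac34,\tfrac73]$, so no point of $E_3$ has first coordinate in the open gap $(\tfrac7{12},\tfrac34)$. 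As $\tfrac7{12}<\tfrac58<\tfrac34$ (any interior point of the gap would serve equally well), the vertical segment $B$ at $x=\tfrac58$ meets $E_3$ nowhere.

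All three verifications are routine; the substantive point—already encoded in the statement through the choice of $Q$—is that the correct forward-invariant region is a slanted trapezoid rather than a rectangle, its slant being exactly what absorbs the maps with large real translation while keeping the left edge flush with $x=0$, so that the projection leaves a gap around $x=\tfrac58$. I expect the main (still elementary) obstacle to be the interval bookkeeping in the second step, i.e.\ confirming that the five images of $A$ overlap consecutively and thus cover $A$ without leaving a gap.
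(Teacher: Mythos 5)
The paper offers no proof of this lemma, dismissing it as "routine computations," and your argument correctly supplies exactly those computations: the inequality $a_k\le 3+\tfrac43 b_k$ does give forward invariance of $Q$ (with equality only at $d_5$ and $d_{12}$), the five intervals $[1,\tfrac43],[\tfrac54,\tfrac{19}{12}],[\tfrac32,\tfrac{11}{6}],[\tfrac74,\tfrac{25}{12}],[2,\tfrac73]$ do overlap consecutively so that $A$ is the attractor of the sub-IFS $\{f_8,\dots,f_{12}\}$ and hence lies in $E_3$, and the projection of $\bigcup_k f_k(Q)$ is $[0,\tfrac7{12}]\cup[\tfrac34,\tfrac73]$, whose gap contains $\tfrac58$ (consistent with the lemma's parenthetical remark). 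Your proof is correct and is in the spirit of the verification the paper leaves to the reader.
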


By Lemma \ref{E_3-a} and the definition of $\Fc_3$, we may check that the segments $B_k=\{1-4^{-k}\times\frac38+t\mi:0\le t\le1\}$ are disjoint from $E_3$ for each $k\ge1$ and that $C=\{u+v\mi: u\in \Kc_{1/4}, 0\le v\le 1\}$ is a subset of $E_3$, where $\Kc_{1/4}\subset\bbR$ is the self-similar set determined by $\{x\mapsto\frac{x}{4}, x\mapsto\frac{x+3}{4}\}$. See Figure \ref{leave-2} and verify two basic observations in the following lemmas.

\begin{lemm}\label{E_3-b}
Let $\displaystyle X=\left(\bigcup_{k=9}^{12}f_k(C)\right)\cup\{t+\mi: 1\le t\le2\}$. Then $X$ is a non-locally connected continuum and is a subset of $E_3$.
\end{lemm}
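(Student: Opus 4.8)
The plan is to give $X$ a fully explicit product description, recognize it as a copy of the Cantor comb $\mathbb K$ from Example \ref{comb+}, and then read off each required property. Identifying $u+v\mi$ with the pair $(u,v)$ so that $C=\Kc_{1/4}\times[0,1]$, I would first compute the four images directly from $f_k(x)=(x+d_k)/4$ with $d_k=(k-5)+3\mi$ for $k=9,\dots,12$. Thus $f_k(u+v\mi)=\frac{u+(k-5)}{4}+\frac{v+3}{4}\mi$, so that $f_k(C)=\frac{\Kc_{1/4}+(k-5)}{4}\times[\frac34,1]$. Since the imaginary part sweeps the same interval $[\frac34,1]$ in all four cases, the union factors as
$$\bigcup_{k=9}^{12}f_k(C)=\Kc'\times\left[\tfrac34,1\right],\qquad\text{where}\quad\Kc':=\bigcup_{j=4}^{7}\frac{\Kc_{1/4}+j}{4}.$$
The four translated copies of $\Kc_{1/4}$ lie in the consecutive intervals $[1,\frac54],[\frac54,\frac32],[\frac32,\frac74],[\frac74,2]$, which meet only at their endpoints, so $\Kc'\subset[1,2]$ is again a Cantor set: compact, totally disconnected and perfect. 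Hence $X=\left(\Kc'\times[\frac34,1]\right)\cup\left([1,2]\times\{1\}\right)$, a comb whose ``teeth'' $\{u'\}\times[\frac34,1]$ ($u'\in\Kc'$) all meet the horizontal bar $[1,2]\times\{1\}$ at their top endpoints.

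With this description, the inclusion $X\subset E_3$ is immediate. Each $f_k$ is one of the contractions of the IFS $\Fc_3$, so $f_k(E_3)\subset E_3$; together with $C\subset E_3$ (established just before the lemma) this gives $f_k(C)\subset E_3$ for $k=9,\dots,12$. For the bar I would invoke Lemma \ref{E_3-a}: since $2<\frac73$, the segment $\{t+\mi:1\le t\le2\}$ is contained in $A=\{t+\mi:1\le t\le\frac73\}\subset E_3$. The continuum property is equally direct: $X$ is compact as a finite union of compact sets, and it is connected because the bar $[1,2]\times\{1\}$ is connected and each tooth meets it at $(u',1)$ with $u'\in\Kc'\subset[1,2]$.

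It remains to show that $X$ is not locally connected, which I would do at any point $p=(u_0,v_0)$ with $u_0\in\Kc'$ and $\frac34\le v_0<1$. Choosing an open disc $U=U(p,\rho)$ with $\rho<1-v_0$, the disc $U$ stays strictly below the bar, so $U\cap X$ is a union of pairwise disjoint vertical segments and carries no horizontal connection. Projecting to the first coordinate (a continuous map whose image lies in the totally disconnected $\Kc'$) shows that the component of $U\cap X$ crossing $p$ is exactly the single vertical slit through $u_0$. Since $\Kc'$ is perfect, points of other teeth accumulate at $p$, so this slit contains no neighborhood of $p$ in $X$; by Definition \ref{lc}, $X$ is not locally connected at $p$. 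Thus $X$ is a non-locally-connected continuum contained in $E_3$, as claimed.

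The step needing genuine care is the last one: one must justify that inside $U$ the teeth truly fall into distinct components, i.e. that the only points of $X$ at heights $<1$ form the disjoint union $\Kc'\times[\frac34,1)$ with no horizontal links. This is exactly what the product formula of the first paragraph provides, so once that formula is in hand the separation is forced and the remaining verifications are routine.
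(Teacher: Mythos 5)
Your proof is correct, and it supplies in full the routine verification that the paper leaves implicit (the lemma is presented there as a ``basic observation'' to be checked from the figure, with no written proof). The explicit product decomposition $X=\bigl(\Kc'\times[\tfrac34,1]\bigr)\cup\bigl([1,2]\times\{1\}\bigr)$, the inclusion via $f_k(C)\subset f_k(E_3)\subset E_3$ together with Lemma \ref{E_3-a} for the bar, and the failure of local connectedness at points of the teeth below the bar are exactly the intended argument.
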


\begin{lemm}\label{E_3-c}
Let $R$ be the rectangle spanned by $B$ in Lemma \ref{E_3-a} and the line segment $\{1+t\mi: 0\le t\le1\}$. Then $f_{12}(R)$ is disjoint from $f_k(Q)$ for $1\le k\le 11$.
\end{lemm}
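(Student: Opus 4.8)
The plan is to prove this as a finite disjointness check between explicit polygons, since $f_{12}(R)$ and the eleven sets $f_k(Q)$ are all concrete images of a rectangle and a trapezoid under affine similarities. First I would make $f_{12}(R)$ explicit: from $R=[5/8,1]\times[0,1]$ and $f_{12}(z)=(z+7+3\mi)/4$ one reads off the axis-parallel rectangle $f_{12}(R)=[61/32,2]\times[3/4,1]$, which sits in the top strip $y\in[3/4,1]$ of the unit square. Each $f_k(Q)$ is the trapezoid with vertices $f_k(0),f_k(1),f_k(\mi),f_k(7/3+\mi)$, obtained by scaling $Q$ by $1/4$ and translating by $d_k/4$; its vertical range is $[\,\mathrm{Im}\,d_k/4,(\mathrm{Im}\,d_k+1)/4\,]$ and its only oblique side is the right edge from $f_k(1)$ to $f_k(7/3+\mi)$.

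I would then organize the eleven cases by the vertical position $b_k:=\mathrm{Im}\,d_k\in\{0,1,2,3\}$ of the digit, exploiting that $f_{12}(R)$ lives in $y\in[3/4,1]$. When $b_k\le 1$, the cell $f_k(Q)$ lies in $y\in[0,1/2]$ and is vertically separated from $f_{12}(R)$, so nothing more is needed; this disposes of the digits $0,3,\mi,3+\mi$ at once. When $b_k=2$ (the digits $2\mi$ and $3+2\mi$), the vertical ranges meet only along $y=3/4$, and there it suffices to compare the top edge of $f_k(Q)$, with $x\in[\mathrm{Re}\,d_k/4,(7/3+\mathrm{Re}\,d_k)/4]$, against the bottom edge $x\in[61/32,2]$ of $f_{12}(R)$; since $\mathrm{Re}\,d_k\in\{0,3\}$ the top edge ends no further right than $4/3<61/32$, giving disjointness.

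The delicate cases are the top-row digits $b_k=3$ (that is, $k\in\{4,8,9,10,11\}$), for which the vertical ranges coincide with that of $f_{12}(R)$ and disjointness must come entirely from horizontal separation. Here I would use that $Q$ widens upward, so the rightmost point of $f_k(Q)$ is its top-right vertex $f_k(7/3+\mi)$, whose $x$-coordinate is $(7/3+\mathrm{Re}\,d_k)/4$; disjointness from $f_{12}(R)$ then reduces to comparing this value with the left edge $x=61/32$ of the rectangle. I expect the main obstacle to be the top-row cell lying immediately to the left of $f_{12}(R)$, where the oblique right edge of $f_k(Q)$ comes closest to the rectangle and the margin is smallest; for that cell a crude bounding box is inconclusive and one must use the exact top-right vertex (equivalently, parametrize the right edge $x=(1+\mathrm{Re}\,d_k)/4+\tfrac43\,(y-3/4)$ and check the inequality on the full overlap $y\in[3/4,1]$). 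Once this closest cell is settled, the remaining top-row cells lie strictly further left and follow a fortiori, completing the finite verification.
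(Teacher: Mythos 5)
Your strategy---make $f_{12}(R)=[\tfrac{61}{32},2]\times[\tfrac34,1]$ explicit and run a row-by-row disjointness check against the eleven trapezoids $f_k(Q)$---is exactly the ``routine verification'' the paper intends (it prints no proof of this lemma, offering it as a basic observation to verify), and your handling of the rows with $\mathrm{Im}\,d_k\le 2$ is correct. The gap is in the top row, at precisely the cell you yourself flag as ``the main obstacle'': you reduce that case to comparing the top-right vertex $f_k(\tfrac73+\mi)$ with the line $x=\tfrac{61}{32}$, but you never carry out the comparison, and for $k=11$ (digit $d_{11}=6+3\mi$) it goes the wrong way. The top-right vertex of $f_{11}(Q)$ is $\bigl(\tfrac{25}{12},1\bigr)$, and $\tfrac{25}{12}=\tfrac{200}{96}>2>\tfrac{61}{32}=\tfrac{183}{96}$; the top edge of $f_{11}(Q)$ is $[\tfrac32,\tfrac{25}{12}]\times\{1\}$ and therefore contains the entire top edge $[\tfrac{61}{32},2]\times\{1\}$ of $f_{12}(R)$. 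Using your own parametrization of the slanted edge, $x=\tfrac74+\tfrac43\bigl(y-\tfrac34\bigr)$ exceeds $\tfrac{61}{32}$ for all $y>\tfrac{111}{128}$, so $f_{11}(Q)\cap f_{12}(R)$ is a nondegenerate region, not empty. Your closing claim that the closest cell can be ``settled'' and the rest follow a fortiori is therefore unfounded: the decisive inequality is false.

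Nor is this an artifact of the generous container $Q$: Lemma A.1 gives $A=\{t+\mi:1\le t\le\tfrac73\}\subset E_3$, hence $f_{11}(A)=[\tfrac74,\tfrac{25}{12}]\times\{1\}\subset E_3$, which already covers $f_{12}(R)\cap\{y=1\}$. So the statement as printed fails for $k=11$ and cannot be rescued by sharper estimates; what does survive (and is closer to what the application at the point $2+\tfrac34\mi$ needs) is disjointness for a vertically truncated rectangle, e.g.\ $f_{12}\bigl([\tfrac58,1]\times[0,y_0]\bigr)$ with $\tfrac74+\tfrac43\cdot\tfrac{y_0}{4}\le\tfrac{61}{32}$, i.e.\ $y_0\le\tfrac{15}{32}$. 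Had you completed the arithmetic for $k=11$ instead of deferring it, you would have discovered that the lemma needs to be reformulated rather than merely verified; as written, your proposal proves a false statement and so cannot be correct.
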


Now we are ready to prove the following result.

\begin{theo}
The component $P$ of $E_3$ containing $X$ is not locally connected at $2+\frac34\mi$.
\end{theo}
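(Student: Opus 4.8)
The plan is to exploit the vertical ``gap'' segments produced by the $B_k$ together with the localization furnished by Lemma \ref{E_3-c}. Write $p=2+\frac34\mi$. Since $1\in\Kc_{1/4}$ we have $p=f_{12}(1)\in f_{12}(C)\subset X\subset P$, and by Lemma \ref{E_3-b} the set $X$ is a \emph{connected} continuum, so all of $X$ lies in the single component $P$. For $k\ge1$ set $\ell_k:=f_{12}(B_k)$; a direct computation gives $\ell_k=\{x_k\}\times[\frac34,1]$ with $x_k:=2-\frac38\,4^{-k-1}$, so $x_k\to2^-$. I first record that $\ell_k\cap E_3=\emptyset$: since $B_k\cap E_3=\emptyset$ (the observation made just after Lemma \ref{E_3-a}) we get $\ell_k\cap f_{12}(E_3)=\emptyset$ at once, while $B_k\subset R$ and Lemma \ref{E_3-c} give $\ell_k\subset f_{12}(R)$ disjoint from $\bigcup_{j\le11}f_j(Q)\supset\bigcup_{j\le11}f_j(E_3)$; hence $\ell_k$ misses all of $E_3$.

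Next I would localize at $p$. Because $p\in f_{12}(R)$ and $f_{12}(R)$ is disjoint from the compact set $K:=\bigcup_{j=1}^{11}f_j(Q)$ by Lemma \ref{E_3-c}, we have $\mathrm{dist}(p,K)>0$. Choosing $\rho$ with $0<\rho<\mathrm{dist}(p,K)$ and putting $U:=B(p,\rho)$, every point of $E_3\cap U$ already lies in $f_{12}(E_3)\subset f_{12}(Q)$. Since $f_{12}(Q)\subset\mathbb{R}\times[\frac34,1]$, this yields the crucial confinement $E_3\cap U\subset\mathbb{R}\times[\frac34,1]$.

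The separation step then runs as follows. Fix any $k$ with $2-\rho<x_k<2$. A point of $E_3\cap U$ on the line $\{x=x_k\}$ would, by the confinement, have ordinate in $[\frac34,1]$ and hence lie on $\ell_k$ — impossible. Thus $\{x=x_k\}$ misses $E_3\cap U$, and $E_3\cap U$ decomposes into the two relatively open-and-closed pieces $(E_3\cap U)\cap\{x<x_k\}$ and $(E_3\cap U)\cap\{x>x_k\}$. Intersecting with $P$, the component $P_U$ of $P\cap U$ crossing $p$ is connected and meets $\{x>x_k\}$ (as $p$ has abscissa $2$), so $P_U\subset\{x>x_k\}$. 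This is exactly where the finite height of each gap $\ell_k$ becomes harmless: the strip-confinement forbids a connected subset of $E_3\cap U$ from slipping below $y=\frac34$ to circumvent a gap. I expect this localization — resting squarely on Lemma \ref{E_3-c} — to be the main obstacle to making the argument airtight.

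Finally I would exhibit the witnessing points. Take $s_m\in\Kc_{1/4}$ with $s_m\uparrow1$ and $s_m<1$ (for instance the orbit $\tfrac34,\tfrac{15}{16},\tfrac{63}{64},\dots$ of $0$ under $x\mapsto\frac{x+3}{4}$), and set $q_m:=f_{12}(s_m)\in f_{12}(C)\subset P$, of abscissa $c_m=\frac{s_m+7}{4}<2$ with $c_m\to2$. For large $m$ we have $q_m\in U$ and $c_m>2-\rho$; choosing $k$ with $c_m<x_k<2$ (possible since $\frac38 4^{-k-1}\to0$) places $q_m$ in $\{x<x_k\}$ while $p\in\{x>x_k\}$, whence $q_m\notin P_U$. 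Therefore $q_m\to p$, each $q_m\in P\cap U$, yet no $q_m$ belongs to the component $P_U$ of $p$; since every $P$-neighborhood of $p$ contains some $q_m$, the component $P_U$ contains no neighborhood of $p$. By Definition \ref{lc}, $P$ is not locally connected at $2+\frac34\mi$, which is what we wanted.
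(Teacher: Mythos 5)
Your proof is correct and follows essentially the same route as the paper's: localize at $2+\frac34\mi$ via Lemma \ref{E_3-c}, take the witnessing sequence $f_{12}(1-4^{-m})$ converging to $f_{12}(1)$, and use the images $f_{12}(B_k)$ of the gap segments to separate these points from $2+\frac34\mi$ inside the localized neighborhood. The only difference is that you spell out explicitly (via the confinement $E_3\cap U\subset\mathbb{R}\times[\frac34,1]$ and the vertical lines $\{x=x_k\}$) the separation step that the paper merely asserts.
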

\begin{proof}
Let $U$ denote the open region $\{u+v\mi: \frac58< u<\frac98, -\frac12< v<\frac12\}$. From
Lemma \ref{E_3-c} we can see that $f_{12}(U)\cap E_3$ is a neighborhood of  $2+\frac34\mi$ under the induced topology of $E_3$. The local structure of $E_3$ at  $2+\frac34\mi$ is equivalent to that at $1$. More precisely, if we let $z_k=1-4^{-k}$ for $k\ge1$ then $\{z_k\}$ and $\{f_{12}(z_k)\}$ are sequences in $E_3$ with \[\lim_kf_{12}(z_k)=f_{12}\left(\lim_kz_k\right)=f_{12}(1)=2+\frac34\mi.
 \]
Moreover, none of the points $f_{12}(z_k)$ can be connected to  $2+\frac34\mi$ by any subset of $f_{12}(U)\cap E_3$. This means that the component of  $f_{12}(U)\cap E_3$ that contains $2+\frac34\mi$ is not a neighborhood of $2+\frac34\mi$ in $E_3$. Going to the component $P$ of $E_3$ containing $X$, we can see that  the component of  $f_{12}(U)\cap P$ that contains $2+\frac34\mi$ is not a neighborhood of $2+\frac34\mi$ in $P$. By definition of local connectedness (See Definition \ref{lc}), we have shown that $P$ is not locally connected at $2+\frac34\mi$.
\end{proof}

\noindent{\bf Acknowledgment}. The authors are grateful to Sheng-You Wen at Hubei University for inspiring discussions. They also owe their thanks to the referee(s) for many concrete suggestions, which have greatly improved the paper, especially the proof for Theorem \ref{main-3}.

\bigskip
Jun Luo, Department of Mathematics, Sun Yat-Sen University, Guangzhou, China. (luojun3@mail.sysu.edu.cn)

Hui Rao, Department of Mathematics, Central China Normal University, Guangzhou, China.
(hrao@mail.ccnu.edu.cn)

Ying Xiong, Department of Mathematics,
South China University of Technology, Guangzhou, China. (xiongyng@gmail.com)

\end{document}